\documentclass[a4paper,11pt,leqno]{article}

\usepackage[latin1]{inputenc}
\usepackage[T1]{fontenc} 
\usepackage[english]{babel}
\usepackage{verbatim}
\usepackage{calligra}
\usepackage{enumerate}
\usepackage{dsfont}
\usepackage{amsfonts}
\usepackage{amsmath}
\usepackage{amsthm}
\usepackage{amssymb}
\usepackage{mathtools}
\usepackage{mathrsfs}
\usepackage{color}
\usepackage{graphicx}
\usepackage{bm}
\usepackage{srcltx}

\usepackage{tikz}
\usepackage[retainorgcmds]{IEEEtrantools}

\usepackage{graphicx}		
\usepackage[hypcap=false]{caption}

\selectlanguage{english}



\theoremstyle{definition}
\newtheorem{defi}{Definition}[section]

\newtheorem{rmk}[defi]{Remark}

\theoremstyle{plane}
\newtheorem{thm}[defi]{Theorem}

\newcommand{\tbf}{\textbf}
\newcommand{\tit}{\textit}

\newcommand{\tsl}{\textsl}

\newcommand{\veps}{\varepsilon}

\newcommand{\wtilde}{\widetilde}

\newcommand{\oline}{\overline}

\newcommand{\ra}{\rightarrow}

\newcommand{\g}{\gamma}
\renewcommand{\k}{\kappa}
\newcommand{\s}{\sigma}
\renewcommand{\t}{\tau}

\newcommand{\lam}{\lambda}
\newcommand{\de}{\delta}
\renewcommand{\o}{\omega}

\newcommand{\R}{\mathbb{R}}

\newcommand{\N}{\mathbb{N}}

\newcommand{\T}{\mathbb{T}}

\newcommand{\D}{\mathbb{D}}

\renewcommand{\div}{{\rm div}\,}

\newcommand{\al}{\alpha}
\newcommand{\bt}{\beta}

\allowdisplaybreaks

\def\d{\partial}
\def\div{{\rm div}\,}
\newcommand{\dd}{\d^2_{x}}

\textheight = 700pt
\textwidth = 460pt
\voffset = -70pt
\hoffset = -40pt


\begin{document}

\newcommand{\fra}[1]{\textcolor{blue}{[***FF: #1 ***]}}

\title{\textsc{\Large{\textbf{Finite time blow-up for some parabolic systems \\ arising in turbulence theory}}}}

\author{\normalsize\textsl{Francesco Fanelli}$\,^1\qquad$ and $\qquad$
\textsl{Rafael Granero-Belinch\'on}$\,^{2}$ \vspace{.5cm} \\
\footnotesize{$\,^{1}\;$ \textsc{Universit\'e de Lyon, Universit\'e Claude Bernard Lyon 1}}  \vspace{.1cm} \\
{\footnotesize \it Institut Camille Jordan -- UMR 5208}  \vspace{.1cm}\\
{\footnotesize 43 blvd. du 11 novembre 1918, F-69622 Villeurbanne cedex, FRANCE} \vspace{.3cm} \\
\footnotesize{$\,^{2}\;$ \textsc{Universidad de Cantabria}}  \vspace{.1cm} \\
{\footnotesize \it Departamento de Matem\'aticas, Estad\'istica y Computaci\'on}  \vspace{.1cm}\\
{\footnotesize Avda. Los Castros s/n, Santander, SPAIN} \vspace{.3cm} \\
\footnotesize{Email addresses: $\,^{1}\;$\ttfamily{fanelli@math.univ-lyon1.fr}}, $\;$
\footnotesize{$\,^{2}\;$\ttfamily{rafael.granero@unican.es}}
\vspace{.2cm}
}

\date\today

\maketitle

\subsubsection*{Abstract}
{\footnotesize 
We study a class of non-linear parabolic systems relevant in turbulence theory. Those systems can be viewed as simplified versions of the
Prandtl one-equation and Kolmogorov two-equations models of turbulence.

We restrict our attention to the case of one space dimension. We consider initial data for which the diffusion coefficients may vanish.
We prove that, under this condition, those systems are locally well-posed in the class of Sobolev spaces of high enough regularity,
but also that there exist smooth initial data for which the corresponding solutions blow up in finite time.

We are able to put in evidence two different types of blow-up mechanism.
In addition, the results are extended to the case of transport-diffusion systems, namely to the case when convection is taken into account.

}

\paragraph*{\small 2020 Mathematics Subject Classification:}{\footnotesize 35K65 
(primary);
35B44, 
35B65, 
76F60 
(secondary).}

\paragraph*{\small Keywords: }{\footnotesize non-linear parabolic systems; degeneracy; local well-posedness; finite
time blow-up; curvature; turbulence.}


\section{Introduction} \label{s:intro}
In \cite{Mie}, Mielke studied the following non-linear parabolic system of degenerate type:
\begin{equation} \label{eq:mielke}
\left\{\begin{array}{l}
       \d_tv\,-\,\div\left(\o^\alpha\,\nabla v\right)\,=\,0 \\[1ex]
       \d_t\o\,-\,\kappa_0\,\div\left(\o^\beta\,\nabla\o\right)\,=\,\o^\alpha\,\big|\nabla v\big|^2\,.
       \end{array}
\right.
\end{equation}
This system is set on $(t,x)\in\R_+\times\Omega$, where $\Omega$ is a smooth domain in $\R^d$, for $d\geq2$, and is supplemented with Neumann boundary conditions.
The two unknowns $v(t,x)$ and $\o(t,x)\geq0$ are scalar functions of time and space variables. The real numbers $\big(\alpha,\beta,\kappa_0\big)$
are positive parameters.

There are two main features which are apparent in system \eqref{eq:mielke}. The first one is the degeneracy of both parabolic equations
near the set $\big\{x\in\Omega\,\big|\;\o(t,x)=0\big\}$, which of course constitutes an obstacle in studying well-posedness theory and qualitative
properties of solutions. The second one is the mechanism for which the energy dissipated by the function $v$ \tsl{via}
the effects of viscosity feeds up the other unknown $\o$, through the term appearing on the right-hand side of its equation.

\medbreak
The main motivation for studying system \eqref{eq:mielke} comes from turbulence theory. In this context,
the function $v$ can be interpreted as a sort of shear velocity, whereas $\o\geq0$ represents the internal energy and $a(\omega)$ and $b(\omega)$ are eddie viscosities.
As a matter of fact, we recall that the standard approach to the (statistical) theory of turbulence starts by decomposing the velocity field $U$ of the fluid
as
$$
U\,=\, u\,+\,\wtilde{u}\,,
$$
where $ u$ is the mean part and $\wtilde u$ is the fluctuating part. 
This decomposition of the velocity field leads to the so-called \emph{Reynolds averaged Navier-Stokes equations}, or RANS: these are a cascade of partial
differential equations for $u$, $\wtilde u$ and their higher order correlations. However, the RANS are not closed and a
\emph{closure hypothesis} is needed.
Usually, this closure hypothesis takes the form of either one or two additional partial differential equations. One of the most famous examples of one-equation model of turbulence is the one derived by Prandtl \cite{Pra} (see also \cite{Bul_Lew-Mal}, \cite{CR-Lew} and \cite{lewandowski1997mathematical} for an extensive study):
\begin{equation} \label{eq:pra_d}
\left\{\begin{array}{l}
       \d_tu\,+\,(u\cdot\nabla) u\,+\,\nabla\pi\,-\,\nu\,\div\left(\sqrt{k}\,\D u\right)\,=\,f \\[1ex]
       \d_tk\,+\,u\cdot\nabla k\,-\,\bt_{1}\,\div\left(\sqrt{k}\,\nabla k\right)\,=\,\bt_{2}\,\sqrt{k}\,\big|\D u\big|^2\,-\,k\,\sqrt{k} \\[1ex]
       \div u\,=\,0\,.
       \end{array}
\right.
\end{equation}
Similarly, a famous example of a two-equations model of turbulence is the one due to Kolmogorov \cite{Kolm_42} (see also \cite{Spald} for an English translation of
that paper and further insights, see \cite{lewandowski1993existence} for related studies), namely
\begin{equation} \label{eq:kolm_d}
\left\{\begin{array}{l}
       \d_tu\,+\,(u\cdot\nabla) u\,+\,\nabla\pi\,-\,\nu\,\div\left(\dfrac{k}{\theta}\,\D u\right)\,=\,f \\[1ex]
       \d_t\theta\,+\,u\cdot\nabla\theta\,-\,\alpha_1\,\div\left(\dfrac{k}{\theta}\,\nabla\theta\right)\,=\,-\,\alpha_2\,\theta^2 \\[1ex]
       \d_tk\,+\,u\cdot\nabla k\,-\,\alpha_3\,\div\left(\dfrac{k}{\theta}\,\nabla k\right)\,=\,-\,k\,\theta\,+\,\alpha_4\,\dfrac{k}{\theta}\,\big|\D u\big|^2 \\[2ex]
       \div u\,=\,0\,.
       \end{array}
\right.
\end{equation}
In \eqref{eq:pra_d} and \eqref{eq:kolm_d}, the symbol $\D u\,=\,(Du+\nabla u)/2$ (where $Du$ is the Jacobian matrix of $u$ and $\nabla u\,=\,^t(Du)$
its transpose matrix) stands for the symmetric part of $Du$. The various parameters $\nu$, $\beta_{i}$ for $i=1,2$, and $\alpha_{j}$ for $j=1,2,3,4$, are positive
real numbers, whose precise value is empirical. We refer to \cite{Frisch}, \cite{Dav}, \cite{Les} and \cite{launder1972lectures} for more details on turbulence theory
and its mathematical models and to the introduction of \cite{F-GB} for more details and further references about system
\eqref{eq:kolm_d}\footnote{Notice that in \cite{F-GB}, as well as in all the classical references about turbulence, the unknown $\theta$ appearing
in system \eqref{eq:kolm_d} is called instead $\o$. Here we have decided to change the notation to avoid any confusion with the function $\o$ appearing
in \eqref{eq:mielke}, which should rather play the role of the quantity $k$ in \eqref{eq:pra_d} or of $k/\theta$ in \eqref{eq:kolm_d}.}.
We also point out that the Kolmogorov model \eqref{eq:kolm_d} can be seen as an ancestor of the more recent (but nowadays still classical)
$k$-$\varepsilon$ model of turbulence introduced by Launder and Spalding in \cite{launder1974}.

\medbreak
As explained in \cite{Mie} (see Section 8 therein), we easily see that system \eqref{eq:mielke} with $\alpha=\beta=1/2$ represents a reduced version of
the one-equation model of Prandtl, while system \eqref{eq:mielke} with $\alpha=\beta=1$ is a toy model of the two-equations model of Kolmogorov.
More in general, one could consider the family of equations
\begin{equation} \label{eq:gen}
\left\{\begin{array}{l}
       \d_tv\,-\,\div\big(a(\omega)\,\nabla v\big)\,=\,0 \\[1ex]
       \d_t\o\,-\,\kappa_0\,\div\big(b(\omega)\,\nabla\o\big)\,=\,a(\omega)\,\big|\nabla v\big|^2\,.
       \end{array}
\right.
\end{equation}
The general system \eqref{eq:gen} was broadly studied by Lewandowski \cite{lewandowski1997analyse}, see also
\cite{bernardi2009automatic}, \cite{lederer2007rans} and \cite{parietti1998quasi}.
It reduces to the model \eqref{eq:mielke} considered by Mielke when taking $a(\omega)=\omega^\alpha$ and $b(\omega)=\omega^\beta$, a choice which we will always
assume from now on.

In order to see the relation between turbulence theory and system \eqref{eq:mielke}, we observe that
the two systems \eqref{eq:pra_d} and \eqref{eq:kolm_d} present the same main features of Mielke's model,
namely degeneracy of the viscosity (dissipative) effect for vanishing $k$ and energy transfer mechanism from $u$ to $k$ \textsl{via} the
$\beta_{2}$ and $\alpha_4$ terms.
If the latter issue is a key property any model of turbulence should retain, the former represents instead a delicate point for the mathematical analysis.
As a matter of fact, if we restrict our attention to the Kolmogorov model \eqref{eq:kolm_d} for instance, previous results on well-posedness
in the framework of both weak and strong solutions have always either avoided to consider the possible vanishing of the function $k$ (see \cite{Mie-Nau},
\cite{Kos-Kub} and \cite{Kos-Kub_glob}, for instance), or imposed suitable conditions on the initial data to control the way
$k$ may get close to the $0$ value (see for instance \cite{Bul-Mal} in this direction).

Recently, in \cite{F-GB} we considered a one-dimensional reduction of system \eqref{eq:kolm_d}, where of course we suppressed the
divergence-free condition on the velocity field $u$ (which is a scalar field in $1$-D) and we erased the pressure term $\nabla\pi$ from the equations.
Our focus in \cite{F-GB} was to consider initial data for which $k_0$ may vanish in a generic way. We proved local well-posedness in $H^m$ spaces
for any $m\in\N$ with $m\geq 2$, and the existence of smooth initial data which give rise to solutions which blow up in finite time. 
Still in \cite{F-GB}, we introduced a toy-model of the considered $1$-D Kolmogorov two-equation model of turbulence, namely (with the notation
of the present paper) the system
\begin{equation} \label{eq:toy-kolm}
\left\{\begin{array}{l}
       \d_tv\,+\,v\,\d_xv-\,\d_x\left(\o\,\d_x v\right)\,=\,0 \\[1ex]
       \d_t\o\,+\,v\,\d_x\o\,-\,\kappa_0\,\d_x\left(\o\,\d_x\o\right)\,=\,\o\,\big|\d_xv\big|^2\,.
       \end{array}
\right.
\end{equation}
As a matter of fact, the main difficulties for proving local well-posedness and finite time singularity formation in the case of vanishing $k$
can be already seen in equations \eqref{eq:toy-kolm}, which are much simpler than the full model \eqref{eq:kolm_d}, even in the $1$-D reduction case.
Notice that system \eqref{eq:toy-kolm} is analogous to Mielke's system \eqref{eq:mielke} in $1$-D, when taking $\alpha=\bt=1$ and when keeping
track of the transport terms $v\d_xv$ and $v\d_x\o$ in the two equations.

\medbreak
The main concern of paper \cite{Mie} was to study several properties of system \eqref{eq:mielke} in the degenerate case, namely in the case where
$\o\geq0$ is allowed to vanish on a set of non-zero measure. In particular, in that paper the author studied the scaling laws related to system
\eqref{eq:mielke}, steady states and travelling fronts, existence of weak and very weak solutions for special ranges of the two parameters $\alpha$ and $\beta$.

In the present paper, we consider the one-dimensional reduction of system \eqref{eq:mielke}, which reads
\begin{equation} \label{eq:toy}
\left\{\begin{array}{l}
       \d_tv\,-\,\d_x\left(\o^\alpha\,\d_xv\right)\,=\,0 \\[1ex]
       \d_t\o\,-\,\kappa_0\d_x\left(\o^\beta\,\d_x\o\right)\,=\,\o^\alpha\,\big|\d_xv\big|^2\,,
       \end{array}
\right.
\end{equation}
together with its transport-diffusion counterpart (see system \eqref{eq:parab} in Section \ref{s:convective}).
We set system \eqref{eq:toy} on the one-dimensional torus
\[
\T\,:=\,[-\pi,\pi]/\sim\,,
\]
where the symbol $\sim$ denotes the equivalence relation which identifies the points $-\pi$ and $\pi$, and we supplement it
with initial conditions $\big(v,\o\big)_{|t=0}\,=\,\big(v_0,\o_0\big)$, where $\o_0\geq0$.

We aim at proving similar results to those shown in \cite{F-GB} for the $1$-D reduction of the Kolmogorov two-equation model of turbulence \eqref{eq:kolm_d}.
More precisely, we will prove: 
\begin{enumerate}[(i)]
 \item local in time well-posedness of system \eqref{eq:toy} in Sobolev spaces $H^m(\T)$, for $m\in\N$, $m\geq2$;
 \item existence of smooth data for which the corresponding solutions blow up in finite time.
\end{enumerate}
While the analysis for item (i) will be strongly inspired by the techniques used in \cite{F-GB}, the result about item (ii)
will be pretty different in spirit. Let us comment a bit more about this.

As just said, concerning local well-posedness, we will follow the approach of \cite{F-GB}. Namely, in order to control the vanishing
of the function $\omega_0$, we will impose
suitable regularity assumptions on $\sqrt{\o_0}$; correspondingly, in order to propagate such regularity, we will work with the good
unknowns $\big(v,\sqrt\o\big)$ of system \eqref{eq:toy}, instead of the original unknowns $\big(v,\o\big)$.
Notice however that, although in \cite{Mie} the author is able to consider small values of $\alpha$, namely $0<\alpha<1$, for proving existence of weak solutions,
here we will need to impose rather the conditions $\alpha\geq1$ and $\bt\geq1$: these constraints arise naturally when performing
higher order energy estimates for $v$ and $\o$, in order to absorb some terms by using the degenerate viscosity (diffusion) effect. Let us mention that it is hard to say whether there is a classical solution for parameters outside of our results. It seems clear to us that a more detailed analysis or even new ideas are required to expand the region of parameters where a well-posedness result is available. With this in mind, it is possible to see that the change of unknown $\eta=\sqrt{\o}$ is the best possible choice: any other change of variable of the form $\s_n=\o^{1/n}$,
for some $n\in\N\setminus\{0\}$, would lead to more stringent contraints on the values of the parameters $\alpha$ and $\beta$.

Concerning finite time singularity formation, instead, our result will be quite different from that in \cite{F-GB}. In fact, loosely speaking,
in that paper the blow-up mechanism could be defined ``of Burgers type''. More precisely, the initial velocity field was assumed to be odd with respect to the origin, with
negative slope at $x=0$; this fact, together with the vanishing of $\o_0$ at the same point $x=0$ (a property which is preserved by the flow) was at the basis
of the singularity formation in finite time. On the contrary, in the present paper we will highlight a different blow-up mechanism, based on the
growth of the curvature $\d_x^2\o$ of the function $\o$. In the case $\bt>1$ we will see that $\dd\o$ blows up if and only if the slope
of the velocity fields becomes unbounded; whenever $\bt=1$, instead, one may have that $\d_xv(t,0)$ remains bounded, while $\dd\o(t,0)$ blows up
in finite time. A similar argument applies also to the case of the convection-diffusion system (see Section \ref{s:convective}).

Finally, let us also emphasize that the singularity formation appears related to a point where the variable $\omega=0$. In this regards, it seems to be related with regions of vacuum or cavitation.

\medbreak
We conclude this introduction with an overview of the contents of the paper.

In the next section, we deal with the problem of the local in time well-posedness of system \eqref{eq:toy}. The main result in that direction is Theorem \ref{t:wp}.
In Section \ref{s:blow-up}, instead, we state and prove our main result concerning singularity formation, namely
Theorem \ref{t:symmetry}. Finally, in Section \ref{s:convective} we adapt our discussion about well-posedness and finite time blow-up
to cover the case when a convection term is added to both equations in \eqref{eq:toy}, see system \eqref{eq:parab}. The main results of that section
are contained in Theorems \ref{th:sing_convective} and \ref{th:sing_conv-2}.

\section{Local well-posedness} \label{s:wp}

This section is devoted to stating and proving local well-posedness for system \eqref{eq:toy}. To begin with, we decide to follow the approach
of \cite{F-GB} and work with the set of good unknowns $\big(v,\sqrt{\o}\big)$. We will generalise this approach in Subsection \ref{ss:general} below.

Our first main result of this section can be formulated in the following way.

\begin{thm} \label{t:wp}
Fix $m\in\N$ such that $m\geq 2$. Take an initial datum $\big(v_0,\o_0\big)$ belonging to $H^m(\T)$, such that $\o_0\geq0$
and $\sqrt{\o_0}\in H^m(\T)$ as well. Assume in addition that
\begin{align*}
&\al\,\in\,\left\{1\,,\ \frac{3}{2}\,, \ 2\,, \ \ldots \frac{m}{2}\right\}\qquad\qquad \mbox{ or }\quad \al\,\geq\,\frac{m+1}{2}\,, \\
&\beta\,\in\,\left\{1\,,\ \frac{3}{2}\,, \ 2\,, \ \ldots \frac{m}{2}\right\}\qquad\qquad \mbox{ or }\quad \beta\,\geq\,\frac{m+1}{2}\,,
\end{align*}

Then, there exists a time $T>0$ such that system \eqref{eq:toy} admits a unique solution $\big(v,\o\big)$ on $[0,T]\times\T$
with the following properties:
\begin{itemize}
\item for any time $t\in[0,T]$, one has $\o(t)\geq0$;
\item $v$ and $\sqrt\o$ belong to $L^\infty\big([0,T];H^m(\T)\big)\,\cap\, \bigcap_{s<m}C\big([0,T];H^s(\T)\big)$;
\item the functions $\o^{\alpha}\,\d_x^{m+1}v$ and $\o^{\beta}\,\d_x^{m+1}\sqrt{\o}$ both belong to $L^2\big([0,T];L^2(\T)\big)$.
\end{itemize}
\end{thm}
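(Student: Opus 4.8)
The plan is to work, as anticipated, with the good unknowns $\big(v,\eta\big)$, $\eta=\sqrt\o$. Writing $\o=\eta^2$ and $\d_x\o=2\eta\,\d_x\eta$, system \eqref{eq:toy} is equivalent to
\begin{align*}
\d_t v&=\d_x\big(\eta^{2\al}\,\d_x v\big),\\
\d_t\eta&=\kappa_0\,\eta^{2\bt}\,\dd\eta+\kappa_0(2\bt+1)\,\eta^{2\bt-1}(\d_x\eta)^2+\tfrac12\,\eta^{2\al-1}(\d_x v)^2 .
\end{align*}
First I would build approximate solutions $\big(v_\veps,\eta_\veps\big)$ by adding a nondegenerate viscosity $\veps\,\dd$ to each equation: the regularised system is uniformly parabolic and locally solvable by standard quasilinear theory (or a Galerkin scheme). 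A maximum principle for the $\eta$-equation keeps $\eta_\veps\geq0$: at a would-be first zero the diffusion $\kappa_0\eta^{2\bt}\dd\eta+\veps\dd\eta$ and the reaction terms $\kappa_0(2\bt+1)\eta^{2\bt-1}(\d_x\eta)^2+\tfrac12\eta^{2\al-1}(\d_x v)^2$ are all nonnegative once $\al,\bt\geq1$, so $\d_t\eta\geq0$ there. Hence $\o_\veps=\eta_\veps^2\geq0$.

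The heart of the proof is a family of $H^m$ energy estimates uniform in $\veps$. Setting $E=\|v_\veps\|_{H^m}^2+\|\eta_\veps\|_{H^m}^2$, I would differentiate both equations $m$ times, test against $\d_x^m v_\veps$ and $\d_x^m\eta_\veps$, and integrate by parts. This produces the two \emph{degenerate} dissipations $\int_\T\eta^{2\al}|\d_x^{m+1}v|^2$ and $\kappa_0\int_\T\eta^{2\bt}|\d_x^{m+1}\eta|^2$ (which will yield the last item of the statement) together with commutator and coupling terms of the schematic forms $\int_\T\d_x^{j}\!\big(\eta^{2\al}\big)\,\d_x^{m+1-j}v\,\d_x^{m+1}v$ and $\int_\T\eta^{2\al-1}\,\d_x v\,\d_x^{m+1}v\,\d_x^m\eta$. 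The top commutator $j=1$, and likewise the top part of the coupling term, are absorbed into the dissipations by Young's inequality, at the cost of factors $\eta^{2\al-2}$ and $\eta^{2\bt-2}$; this is exactly where $\al\geq1$ and $\bt\geq1$ are needed, so that these negative powers of $\eta$ stay bounded by $\|\eta\|_{L^\infty}$. In the remaining commutators ($j\geq2$) the velocity factor $\d_x^{m+1-j}v$ carries at most $m-1$ derivatives, so I would integrate by parts again and invoke the tame product and commutator (Moser, Gagliardo--Nirenberg) estimates, the few genuinely top-order residual terms being sent once more into the dissipations. The outcome should be a differential inequality $\tfrac{d}{dt}E+\textnormal{dissipation}\leq P(E)$ with $P$ continuous, giving a uniform existence time $T>0$, uniform bounds in $L^\infty\big([0,T];H^m\big)$, and $\eta^{\al}\d_x^{m+1}v,\ \eta^{\bt}\d_x^{m+1}\eta\in L^2\big([0,T];L^2\big)$.

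The main obstacle, and the origin of the arithmetic conditions on $\al$ and $\bt$, is the control near the zero set of $\eta$ of the commutators generated by the degenerate coefficients $\eta^{2\al}$, $\eta^{2\bt}$. These rest on the composition estimate $\|\eta^{2\al}\|_{H^m}\leq C\big(\|\eta\|_{L^\infty}\big)\,\|\eta\|_{H^m}$, valid precisely when the profile $s\mapsto s^{2\al}$ is smooth enough on $[0,\infty)$: either it is a polynomial, i.e. $2\al\in\N$, which with $\al\geq1$ gives the discrete list $\al\in\{1,3/2,\dots,m/2\}$ (the integer values above $m/2$ being already covered), or it is of class $C^{m+1}$, i.e. $2\al\geq m+1$, that is $\al\geq(m+1)/2$; the same dichotomy for $\bt$ accounts for the two admissible ranges. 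This computation is also what singles out $\eta=\sqrt\o$ as optimal: with $\s_n=\o^{1/n}$ the coupling term in the $\s_n$-equation reads $\tfrac1n\s_n^{\,n\al-n+1}(\d_x v)^2$, and absorbing its top-order part into the $v$-dissipation $\s_n^{\,n\al}|\d_x^{m+1}v|^2$ leaves a factor $\s_n^{\,n\al-2n+2}$, bounded only for $\al\geq 2-2/n$, a constraint that is least restrictive exactly at $n=2$.

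Finally, I would close by compactness and stability. The uniform $H^m$ bounds and the equations control $\d_t v_\veps,\d_t\eta_\veps$ in $H^{m-2}$ uniformly in $\veps$, so the Aubin--Lions--Simon lemma yields a subsequence converging strongly in $C\big([0,T];H^s\big)$ for every $s<m$ and weakly-$*$ in $L^\infty\big([0,T];H^m\big)$. Strong convergence suffices to pass to the limit in all the (locally Lipschitz) nonlinearities, producing a solution $\big(v,\eta\big)$ with $\o=\eta^2\geq0$ and the stated regularity; the memberships $\o^{\al}\d_x^{m+1}v,\ \o^{\bt}\d_x^{m+1}\sqrt\o\in L^2\big([0,T];L^2\big)$ follow from the uniform dissipation bounds by weak lower semicontinuity. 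Uniqueness I would obtain from a low-order ($L^2$ or $H^1$) energy estimate on the difference of two solutions, closed by Gr\"onwall's lemma thanks to the $H^m$ bounds; this also upgrades the weak convergence to convergence of the whole family. Time continuity with values in $H^s$, $s<m$, is already part of the Aubin--Lions output.
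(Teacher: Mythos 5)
Your proposal follows essentially the same route as the paper: pass to the good unknowns $\big(v,\eta\big)$ with $\eta=\sqrt\o$, run $H^m$ energy estimates in which the top-order commutator and coupling terms are absorbed into the degenerate dissipations $\int_\T\eta^{2\al}|\d_x^{m+1}v|^2$ and $\int_\T\eta^{2\bt}|\d_x^{m+1}\eta|^2$, and trace the arithmetic conditions on $\al,\bt$ to the dichotomy ``the offending coefficient of $\eta^{2\al-(m+1)}(\d_x\eta)^{m+1}\d_xv$ vanishes, or the power of $\eta$ is nonnegative'' --- which is exactly the paper's mechanism, phrased by you through the smoothness of $s\mapsto s^{2\al}$. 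The paper only records the \emph{a priori} estimates (at the $H^2$ level) and leaves the approximation and compactness scheme as an exercise, so your regularisation-plus-Aubin--Lions completion is a consistent filling-in rather than a divergence.
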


The proof of the previous theorem follows the main lines of Theorem 1.1 in \cite{F-GB}. Therefore, here we limit ourselves to state
\tsl{a priori} estimates for smooth solutions, leaving as an exercise for the reader
the precise construction of smooth approximate solutions satisfying the uniform bounds and the proof of the convergence of those solutions to a true solution
of system \eqref{eq:toy}.

Before moving on, once for all in this section we fix the value
\[
\kappa_0\,=\,1\,.
\]
However, we will keep track of the precise value of this parameter in the blow-up result, see Sections \ref{s:blow-up} and \ref{s:convective},
devoted respectively to the purely parabolic case, \tsl{i.e.} system \eqref{eq:toy}, and to the system with convective term.

\medbreak
The proof of \tsl{a priori} estimates is divided into two steps. First of all, we present estimates for the lower order norms, namely $L^p$-type norms
of $v$ and $\o$. In the second step (performed in Subsection \ref{ss:high}), instead, we exhibit bounds for the higher order norms, namely $L^2$ norms of
the derivatives $\d_x^mv$ and $\d_x^m\o$.

\subsection{Estimates for the $L^p$ norms of the solution} \label{ss:L^p}

We present here $L^p$-type \tsl{a priori} bounds for solutions to system \eqref{eq:toy}. Recall that, throughout this section, we assume to
have smooth solutions on $(t,x)\in\R_+\times\T$, so all the computations below will be fully justified.

First of all, from the equation for $\o$ we see that
\[
 \d_t\o\,-\,\d_x\big(\o^\bt\,\d_x\o\big)\,\geq\,0\,.
\]
Then, by \textsl{e.g.} comparison with the smooth solution $w$ to the porous medium equation $\d_tw\,-\,\d_x\big(w^\bt\,\d_xw\big)\,=\,0$ (see Chapter 3 of \cite{Vaz}),
$\o$ remains positive for all times:
\begin{equation} \label{est:o_pos}
 \forall\,(t,x)\in\R_+\times\T\,,\qquad\qquad \omega(t,x)\,\geq\,0\,.
\end{equation}

Next, we perform an energy estimate for $v$, obtaining that
\begin{equation} \label{est:v-L^2}
\forall\,t\geq0\,,\qquad\qquad \|v(t)\|_{L^2}^2\,+\,2\int^t_0\int_\T\o^\alpha\,\big|\d_xv\big|^2\,dx\,d\t\,\leq\,\left\|v_0\right\|_{L^2}^2\,.
\end{equation}
On the other hand, integrating the equation for $\o$ on the torus, we get
\[
\frac{1}{2}\,\frac{d}{dt}\int_\T\o\,dx\,+\,\int_\T\o^\bt\,\big|\d_x\o\big|^2\,dx\,=\,\int_\T\o^\alpha\,\big|\d_xv\big|^2\,dx\,.
\]
Thus, in view of \eqref{est:o_pos} and \eqref{est:v-L^2}, an integration in time yields
\begin{equation} \label{est:o_L^1}
\forall\,t\geq0\,,\qquad\qquad
\|\o(t)\|_{L^1}\,+\,\int^t_0\int_\T\o^\bt\,\big|\d_x\o\big|^2\,dx\,d\t\,\leq\,\left\|\o_0\right\|_{L^1}\,+\,\left\|v_0\right\|_{L^2}^2\,.
\end{equation}
Observe that, after defining
\[
\eta_0\,:=\,\sqrt{\o_0}\qquad\qquad \mbox{ and }\qquad\qquad \eta\,:=\,\sqrt{\o}\,,
\]
inequality \eqref{est:o_L^1} yields
\begin{equation} \label{est:eta_L^2}
\forall\,t\geq0\,,\qquad\qquad
\|\eta(t)\|_{L^2}^2\,\leq\,\left\|\eta_0\right\|_{L^2}^2\,+\,\left\|v_0\right\|_{L^2}^2\,.
\end{equation}

\subsection{Propagation of high regularity norms} \label{ss:high}

We now turn our attention to the estimate for the high regularity norm. For the sake of conciseness, we restrict our attention to $H^2$ estimates; propagation
of higher order regularities easily follow by analogous arguments.

In addition, we observe that, for any smooth function $u\in H^2(\T)$, for any $p\in[1,+\infty]$ one has
\[
\left\|u\right\|^2_{H^2}\,\lesssim\,\|u\|_{L^p}^2\,+\,\left\|\dd u\right\|_{L^2}^2\,,
\]
see the details \tsl{e.g.} in \cite{F-GB}. Therefore, we can focus only on the propagation of the $L^2$ bounds for $\dd v$ and $\dd\eta$.

To begin with, we formulate system \eqref{eq:toy} in terms of the new set of unknowns $\big(v,\eta\big)$: we have
\begin{equation} \label{eq:toy_good}
\left\{\begin{array}{l}
       \d_tv\,-\,\d_x\left(\eta^{2\alpha}\,\d_xv\right)\,=\,0 \\[1ex]
       \d_t\eta\,-\,\d_x\left(\eta^{2\beta}\,\d_x\eta\right)\,=\,\dfrac{1}{2}\,\eta^{2\alpha-1}\,\big|\d_xv\big|^2\,+\,\eta^{2\bt-1}\,\big|\d_x\eta\big|^2\,.
       \end{array}
\right.
\end{equation}

Then, we derive equations for $\dd v$ and $\dd\eta$.  Let us start with $\dd v$: differentiating twice the first equation in \eqref{eq:toy_good}, we obtain
\begin{equation} \label{eq:dd_v}
\d_t\dd v\,-\,\d_x\big(\eta^{2\al}\,\d_x\dd v\big)\,=\,f\,,
\end{equation}
where we have defined
\begin{align*}
f\,&:=\,\d_x\Big(2\d_x(\eta^{2\al})\,\dd v+\d_x^2(\eta^{2\al})\,\d_x v\Big) \\
&=\,2\al\,(2\al-1)\,(2\al-2)\,\eta^{2\al-3}\,(\d_x\eta)^3\,\d_xv\,+\,6\al\,(2\al-1)\,\eta^{2\al-2}\,\d_x\eta\,\dd\eta\,\d_xv \\
&\; +\,6\al\,(2\al-1)\,\eta^{2\al-2}\,(\d_x\eta)^2\,\dd v\,
+\,2\al\,\eta^{2\al-1}\,\d_x^3\eta\,\d_xv\,+\,6\al\,\eta^{2\al-1}\,\dd\eta\,\dd v\,+\,4\al\,\eta^{2\al-1}\,\d_x\eta\,\d_x^3v \\
&=\,\sum_{j=1}^6f_j\,.
\end{align*}
Likewise, we find the following equation for $\dd\eta$:
\begin{equation} \label{eq:dd_eta}
\d_t\dd \eta\,-\,\d_x\big(\eta^{2\bt}\,\d_x\dd \eta\big)\,=\,g\,,
\end{equation}
where, this time, we have set
\begin{align*}
g\,&:=\,\d_x\Big(2\d_x(\eta^{2\bt})\,\dd \eta+\d_x^2(\eta^{2\bt})\,\d_x\eta\Big)\,+\,
\dd\left(\dfrac{1}{2}\,\eta^{2\alpha-1}\,\big|\d_xv\big|^2\,+\,\eta^{2\bt-1}\,\big|\d_x\eta\big|^2\right) \\
&=\,(2\bt+1)\,(2\bt-1)\,(2\bt-2)\,\eta^{2\bt-3}\,(\d_x\eta)^4\,+\,(12\bt+5)\,(2\bt-1)\,\eta^{2\bt-2}\,(\d_x\eta)^2\,\dd\eta \\
&\qquad\; +\,(6\bt+2)\,\eta^{2\bt-1}\,\big(\dd\eta\big)^2\,+\,(6\bt+2)\,\eta^{2\bt-1}\,\d_x\eta\,\d_x^3\eta \\
&\qquad\; +\,(2\al-1)\,(\al-1)\,\eta^{2\al-3}\,\big(\d_x\eta\big)^2\,\big(\d_xv\big)^2\,
+\,\dfrac{(2\al-1)}{2}\,\eta^{2\al-2}\,\dd\eta\,\big(\d_xv\big)^2 \\
&\qquad\; +\,2\,(2\al-1)\,\eta^{2\al-2}\,\d_x\eta\,\d_xv\,\dd v\,+\,\eta^{2\al-1}\,\big(\dd v\big)^2\,+\,\eta^{2\al-1}\,\d_xv\,\d_x^3v\\
&=\,\sum_{j=1}^9g_j\,.
\end{align*}

We now perform energy estimates on the previous equations \eqref{eq:dd_v} and \eqref{eq:dd_eta}. 
In the computations, we will need the following interpolation inequalities (see \tsl{e.g.} Section 2 of \cite{F-GB} for details).
First of all,
\begin{equation} \label{est:H2-interp}
\forall\,u\in H^2(\T)\,, 
\qquad\qquad
\left\|\d_xu\right\|_{L^2}\,\lesssim\,\left\|u\right\|_{L^2}^{1/2}\;\left\|\dd u\right\|_{L^2}^{1/2}\,.
\end{equation}

Moreover, we have
\begin{equation} \label{est:interpolation}
\forall\,u\in H^2(\T)\,, 
\qquad\qquad
\left\|\d_x u\right\|_{L^4}\,\lesssim\,\left\|u\right\|_{L^\infty}^{1/2}\;\left\|\dd u\right\|_{L^2}^{1/2}\,.
\end{equation}
In fact, notice that we could replace $u$ by $u-\oline u$ in \eqref{est:H2-interp} and \eqref{est:interpolation}, where $\oline u$ denotes
the mean value of $u$ on the torus $\T$: after setting $|\T|$ to be the Lebesgue measure of $\T$, we have
\[
\oline u\,:=\,\frac{1}{|\T|}\int_\T u(x)\,dx\,.
\]

Finally, we have
\begin{equation} \label{est:interp_2}
 \forall\,u\in H^1(\T)\quad \mbox{ such that }\;\oline u\,=\,0\,,\qquad\qquad
\left\|u\right\|_{L^\infty}\,\lesssim\,\left\|u\right\|_{L^p}^{p/(p+2)}\;\left\|\d_xu\right\|_{L^2}^{2/(p+2)}\,,
\end{equation}
for any $p\in[1,+\infty]$.

\subsubsection{Estimates for $\dd v$} \label{sss:v-H^2}

We start by considering the function $v$. Multiplying equation \eqref{eq:dd_v} by $\dd v$ and integrating by parts, standard computations
yield
\begin{equation} \label{est:en-dd_v}
\frac{1}{2}\,\frac{d}{dt}\left\|\dd v\right\|_{L^2}^2\,+\,\int_\T\eta^{2\al}\,\left|\d_x^3v\right|\,dx\,=\,\int_\T f\,\dd v\,dx\,.
\end{equation}
Thus, we have to bound the $L^2$ scalar product of each function $f_j$ with $\dd v$, for any $j\in\{1\ldots 6\}$: this is our next goal.

We start by considering the term $f_1$. Recall that, by assumption, either $\alpha=1$, in which case $f_1=0$, or $2\alpha-3\geq0$. If this latter
condition holds, we can bound
\begin{align*}
\left|\int_\T f_1\,\dd v\,dx\right|\,&\lesssim\,\left\|\eta\right\|_{L^\infty}^{2\al-3}\,\left\|\big(\d_xv,\d_x\eta\big)\right\|_{L^\infty}^2\,
\left\|\d_x\eta\right\|_{L^4}^2\,\left\|\dd v\right\|_{L^2} \\
&\lesssim\,\left\|\eta\right\|_{L^\infty}^{2\al-2}\,\left\|\big(\d_xv,\d_x\eta\big)\right\|_{L^\infty}^2\,
\left\|\big(\dd v,\dd\eta\big)\right\|_{L^2}^2\,,
\end{align*}
where, in passing from the first line to the second one, we have used inequality \eqref{est:interpolation}.

The terms $f_2$ and $f_3$ are bounded in a pretty analogous way:
\begin{align*}
\left|\int_\T \big(f_2+f_3\big)\,\dd v\,dx\right|\,&\lesssim\,
\left\|\eta\right\|_{L^\infty}^{2\al-2}\,\left\|\big(\d_xv,\d_x\eta\big)\right\|_{L^\infty}^2\,\left\|\big(\dd v,\dd\eta\big)\right\|_{L^2}^2\,.
\end{align*}

Now, let us consider the term involving $f_6$. We can estimate it by using Cauchy-Schwarz and Young inequalities: we have
\begin{align*}
\left|\int_\T f_6\,\dd v\,dx\right|\,&\lesssim\,\left\|\eta\right\|_{L^\infty}^{\al-1}\,\left\|\d_x\eta\right\|_{L^\infty}\,\left\|\dd v\right\|_{L^2}\,
\left\|\eta^\alpha\,\d_x^3v\right\|_{L^2} \\
&\leq\,C_\de\,\left\|\eta\right\|_{L^\infty}^{2\al-2}\,\left\|\d_x\eta\right\|^2_{L^\infty}\,\left\|\dd v\right\|_{L^2}^2\,+\,\de\,
\left\|\eta^\alpha\,\d_x^3v\right\|^2_{L^2}\,,
\end{align*}
where the parameter $\de>0$ is, for the time being, arbitrary and will be fixed small enough later on, and the multiplicative constant $C_\de>0$
depends also on such $\de$.

Next, we control the term involving $f_5$. By integrating by parts once, we reconduct ourselves to bounding terms of the same type as the previous ones:
\[
\int_\T f_5\,\dd v\,dx\,=\,-6\al\,(2\al-1)\int_\T\eta^{2\al-2}\,\big(\d_x\eta\big)^2\,\big(\dd v\big)^2\,dx\,-\,
12\al\int_\T\eta^{2\al-1}\,\d_x\eta\,\dd v\,\d_x^3v\,dx\,.
\]
Therefore, we get
\begin{align*}
\left|\int_\T f_5\,\dd v\,dx\right|\,&\leq\,C_\de\,\left\|\eta\right\|_{L^\infty}^{2\al-2}\,\left\|\d_x\eta\right\|_{L^\infty}^2\,
\left\|\dd v\right\|_{L^2}^2\,+\,\de\,\left\|\eta^\alpha\,\d_x^3v\right\|^2_{L^2}\,,
\end{align*}
where $\de>0$ is the same as above and the (new) constant $C_\de>0$ depends on it.

Finally, as for $f_4$, it is better to avoid the use of the $L^2$ norm of $\eta^\beta\,\d_x^3\eta$ (which would cause a more restrictive condition
on the parameters); we rather integrate by parts, to obtain
\begin{align*}
\int_\T f_4\,\dd v\,dx\,&=\,-\,2\al\,(2\al-1)\int_\T\eta^{2\al-2}\,\d_x\eta\,\dd\eta\,\d_xv\,\dd v\,dx \\
&\qquad\qquad -\,2\al\int_\T\eta^{2\al-1}\,\dd\eta\,\dd v\,dx\,-\,2\al\int_\T\eta^{2\al-1}\,\dd\eta\,\d_xv\,\d_x^3 v\,dx\,.
\end{align*}
Thus, it is easy to get, for any $\de>0$ as above and a corresponding new constant $C_\de>0$, depending only on $\de$ and $\alpha$, the
following estimate:
\begin{align*}
\left|\int_\T f_4\,\dd v\,dx\right|\,&\leq\,C_\de\,\left\|\eta\right\|_{L^\infty}^{2\al-2}\,\left\|\big(\d_xv,\d_x\eta\big)\right\|_{L^\infty}^2\,
\left\|\big(\dd v,\dd\eta\big)\right\|_{L^2}^2\,+\,2\de\,\left\|\eta^\alpha\,\d_x^3v\right\|^2_{L^2}\,.
\end{align*}

We can now insert all those bounds into \eqref{est:en-dd_v}: taking $\de>0$ small enough, in order to absorbe the corresponding terms on the left-hand side
of that expression, we finally find
\begin{equation} \label{est:dd_v_final}
 \frac{d}{dt}\left\|\dd v\right\|_{L^2}^2\,+\,\int_\T\eta^{2\al}\,\left|\d_x^3v\right|\,dx\,\lesssim\,
 \left\|\eta\right\|_{L^\infty}^{2\al-2}\,\left\|\big(\d_xv,\d_x\eta\big)\right\|_{L^\infty}^2\,
\left\|\big(\dd v,\dd\eta\big)\right\|_{L^2}^2\,.
\end{equation}

\subsubsection{Estimates for $\dd\eta$} \label{sss:eta-H^2}

Now, we perform an energy estimate on the equation \eqref{eq:dd_eta} for $\dd\eta$: analogously to \eqref{est:en-dd_v}, we find
\begin{equation} \label{est:en-dd_eta}
\frac{1}{2}\,\frac{d}{dt}\left\|\dd\eta\right\|_{L^2}^2\,+\,\int_\T\eta^{2\bt}\,\left|\d_x^3\eta\right|\,dx\,=\,\int_\T g\,\dd\eta\,dx\,.
\end{equation}
Therefore, we have to bound the term on the right-hand side. We will consider the terms $g_j$, for $j\in\{1\ldots 9\}$, one by one.

We start by considering $g_1$. Recall that, by assumption, one has $\beta=1/2$ or $\beta=1$, in which cases $g_1\equiv0$, or $\beta\geq 3/2$. Then, arguing
exactly as done for the $f_1$ term, in the previous paragraph, we find
\[
\left|\int_\T g_1\,\dd\eta\,dx\right|\,\lesssim\,\left\|\eta\right\|_{L^\infty}^{2\bt-2}\,\left\|\d_x\eta\right\|_{L^\infty}^2\,
\left\|\dd\eta\right\|_{L^2}^2\,.
\]

The treatment of the term involving $g_5$ is also similar. We recall that also $g_5$ is possibly zero, depending on the value of $\alpha$.
Assuming to be in the worst situation, namely that $\al\geq3/2$ and then $g_5\neq0$, we can bound
\begin{align*}
\left|\int_\T g_5\,\dd\eta\,dx\right|\,\lesssim\,
\left\|\eta\right\|_{L^\infty}^{2\al-3}\,\left\|\d_xv\right\|_{L^\infty}^2\,
\left\|\d_x\eta\right\|_{L^4}^2\,\left\|\dd\eta\right\|_{L^2}\;\lesssim\;\left\|\eta\right\|_{L^\infty}^{2\al-2}\,\left\|\d_xv\right\|_{L^\infty}^2\,
\left\|\dd\eta\right\|_{L^2}^2\,.
\end{align*}

The terms $g_2$, $g_6$ and $g_7$ can be estimated in a similar way, see also what done for $f_2$ and $f_3$. We get
\begin{align*}
\left|\int_\T\big(g_2+g_6+g_7\big)\,\dd\eta\,dx\right|\,&\lesssim\,\left(\left\|\eta\right\|_{L^\infty}^{2\al-2}\,+\,\left\|\eta\right\|_{L^\infty}^{2\bt-2}\right)
\,\left\|\big(\d_xv,\d_x\eta\big)\right\|_{L^\infty}^2\,\left\|\big(\dd v,\dd\eta\big)\right\|_{L^2}^2\,,
\end{align*}

Also the control of the terms $g_4$ and $g_9$ is easy, as it is analogous to what already done for $f_6$. One has the following estimate,
\begin{align*}
\left|\int_\T g_4\,\dd\eta\,dx\right|\,&\lesssim\,\left\|\eta\right\|_{L^\infty}^{\bt-1}\,\left\|\d_x\eta\right\|_{L^\infty}\,\|\dd\eta\|_{L^2}\,
\left\|\eta^\bt\,\d_x^3\eta\right\|_{L^2} \\
&\leq\,C_\de\,\left\|\eta\right\|_{L^\infty}^{2\bt-2}\,\left\|\d_x\eta\right\|^2_{L^\infty}\,\|\dd\eta\|^2_{L^2}\,+\,\de\,
\left\|\eta^\bt\,\d_x^3\eta\right\|^2_{L^2}\,,
\end{align*}
where, as before, $\de>0$ can be chosen arbitrarily small, and the constant $C_\de>0$ only depends on the value of $\de$,
and, \tsl{mutatis mutandis}, also the following one,
\begin{align*}
\left|\int_\T g_9\,\dd\eta\,dx\right|\,
&\leq\,C_\de\,\left\|\eta\right\|_{L^\infty}^{2\al-2}\,\left\|\d_xv\right\|^2_{L^\infty}\,\|\dd\eta\|^2_{L^2}\,+\,\de\,
\left\|\eta^\al\,\d_x^3v\right\|^2_{L^2}\,.
\end{align*}

Let us now focus on the term $g_3$. We proceed analogously as done for $f_5$, namely we integrate by parts once and we reconduct ourselves to treat
terms of the same type as the previous ones. As a matter of fact, we see that
\[
\int_\T \eta^{2\bt-1}\,\big(\dd\eta\big)^3\,dx\,=\,-\,(2\bt-1)\int_\T\eta^{2\bt-2}\,\big(\d_x\eta\big)^2\,\big(\dd\eta\big)^2\,dx\,-\,
2\int_\T\eta^{2\bt-1}\,\d_x\eta\,\dd\eta\,\d_x^3\eta\,dx\,,
\]
from which we easily deduce that
\begin{align*}
\left|\int_\T g_3\,\dd\eta\,dx\right|\,&\leq\,
C_\de\,\left\|\eta\right\|_{L^\infty}^{2\bt-2}\,\left\|\d_x\eta\right\|^2_{L^\infty}\,\|\dd\eta\|^2_{L^2}\,+\,\de\,
\left\|\eta^\bt\,\d_x^3\eta\right\|^2_{L^2}\,,
\end{align*}
where $\de>0$ is as above and $C_\de>0$ is a possibly different constant, but which still depends only on $\beta$ and $\de$.

It remains us to deal with $g_8$, but this term is exactly equal (up to constant factors) to the term $f_5$ treated in Paragraph \ref{sss:v-H^2}.
Therefore, we gather
\begin{align*}
\left|\int_\T g_8\,\dd\eta\,dx\right|\,&\leq\,C_\de\,\left\|\eta\right\|_{L^\infty}^{2\al-2}\,\left\|\d_x\eta\right\|_{L^\infty}^2\,
\left\|\dd v\right\|_{L^2}^2\,+\,\de\,\left\|\eta^\alpha\,\d_x^3v\right\|^2_{L^2}\,.
\end{align*}

Plugging all the previous bounds into \eqref{est:en-dd_eta} and taking $\de>0$ small enough, we finally find
\begin{equation} \label{est:dd_eta_final}
\frac{d}{dt}\left\|\dd\eta\right\|_{L^2}^2\,+\,\int_\T\eta^{2\bt}\,\left|\d_x^3\eta\right|\,dx\,\lesssim\,
 \left(\left\|\eta\right\|_{L^\infty}^{2\al-2}\,+\,\left\|\eta\right\|_{L^\infty}^{2\bt-2}\right)\,\left\|\big(\d_xv,\d_x\eta\big)\right\|_{L^\infty}^2\,
\left\|\big(\dd v,\dd\eta\big)\right\|_{L^2}^2\,.
\end{equation}

\subsection{Final estimates} \label{ss:final}

We are now ready to find a bound for the norm of the solution in terms of the norm of the initial datum only, in some small time interval $[0,T]$.

As in \cite{F-GB}, let us introduce, for any time $t\geq0$, the function
\[
E(t)\,:=\,\left\|v(t)\right\|_{H^2}^2\,+\,\left\|\eta(t)\right\|_{H^2}^2
\]
as the energy of the solution $\big(v,\eta\big)$.
Let also $E(0)$ be defined as the same quantity, but computed on the initial datum $\big(v_0,\eta_0\big)$.

Let us also define
\begin{align*}
 E_0(t)\,:=\,\left\|v(t)\right\|_{L^2}^2\,+\,\left\|\eta(t)\right\|_{L^2}^2 \qquad \mbox{ and }\qquad
 E_2(t)\,:=\,\left\|\dd v(t)\right\|_{L^2}^2\,+\,\left\|\dd\eta(t)\right\|_{L^2}^2\,.
\end{align*}
As before, we denote by $E_0(0)$ and $E_2(0)$ the same quantities, when computed on the initial datum.
Notice that, as observed at the beginning of this Subsection \ref{ss:high}, one has
\begin{equation} \label{eq:en_equiv}
\forall\,t\geq0\,,\qquad\qquad E(t)\,\sim\,E_0(t)\,+\,E_2(t)\,.
\end{equation}
Therefore, in order to find an inequality of the type
\begin{equation} \label{est:en_to-prove}
\sup_{t\in[0,T]}E(t)\,\leq\,C\,E(0)\,,
\end{equation}
for some time $T>0$ and some suitable constant $C>0$, it is enough to find a similar bound (in terms of the whole energy $E(0)$
of the initial datum) for the functions $E_0(t)$ and $E_2(t)$.

Let us start with the low regularity norm. Its control is very easy: owing to inequalities \eqref{est:v-L^2} and \eqref{est:eta_L^2},
one simply has
\begin{equation} \label{est:E_0}
\forall\,t\geq0\,,\qquad\qquad\quad E_0(t)\,\leq\,2\,E_0(0)\,.
\end{equation}

We now switch to the bound for $E_2(t)$. Summing up inequalities \eqref{est:dd_v_final} and \eqref{est:dd_eta_final}, we infer
that, for any time $t\geq0$, one has
\[
\frac{d}{dt}E_2(t)\,\lesssim\,A(t)\,E_2(t)\,,\qquad\qquad \mbox{ where }\qquad
A(t)\,:=\, \left(\left\|\eta\right\|_{L^\infty}^{2\al-2}\,+\,\left\|\eta\right\|_{L^\infty}^{2\bt-2}\right)\,\left\|\big(\d_xv,\d_x\eta\big)\right\|_{L^\infty}^2\,.
\]
On the other hand, using repeatedly \eqref{est:interp_2} with $p=2$ and \eqref{est:H2-interp}, we see that
\[
A(t)\,\lesssim\,\big(E(t)\big)^\g\,,\qquad\qquad \mbox{ for some large enough }\ \g>0\,.
\]
Therefore, integrating the previous inequality in time and summing up the resulting expression with \eqref{est:E_0}, we find, for any
time $t\geq0$, the estimate
\begin{equation*}
 E(t)\,\leq\,E(0)\,+\,C\int^t_0\big(E(\t)\big)^{\g+1}\,d\t\,,
\end{equation*}
for a suitable constant $C>0$. From this inequality, standard arguments ensure that there exist a time $T>0$ and a (possibly larger) constant $C>0$
such that the bound \eqref{est:en_to-prove} holds true.

\medbreak
This completes the proof of the \tsl{a priori} bounds for system \eqref{eq:toy} at $H^2$ level of regularity.
The propagation of $H^m$ regularity, for integer values $m\geq3$, can be obtained by very similar arguments.
As already said, we omit here the details of the proof
of the existence and uniqueness of solutions.

The proof of Theorem \ref{t:wp} is thus completed.

\subsection{Further considerations} \label{ss:general}
The assumptions on $\alpha$ and $\beta$ stated in Theorem \ref{t:wp} may look quite stringent. However, in certain sense, $\eta=\sqrt{\omega}$ seems sharp in our analysis.

To see this, let us consider the powers $\s_n\,:=\,\omega^{1/n}$, for $n\geq2$. Notice that,
in particular, when taking $n=2$, we receover the function $\s_2\,=\,\sqrt{\o}\,=\,\eta$ as before. We are going to show that a series of natural requirements
will appear for $n$ in order to close the estimates, and that the best possible choice in order to respect all those constraints
and, at the same time, consider the largest intervals for $\al$ and $\bt$ is to take $n=2$.

Let us focus on \tsl{a priori} bounds only. First of all, inequality \eqref{est:v-L^2} still holds true. On the other hand, from \eqref{est:o_L^1} we deduce that
\begin{equation} \label{est:sigma_L^2}
\forall\,t\geq0\,,\qquad\qquad
\|\s(t)\|_{L^n}^n\,\lesssim\,\left\|\s_0\right\|_{L^n}^n\,+\,\left\|v_0\right\|_{L^2}^2\,.
\end{equation}

Next, we recast system \eqref{eq:toy} in terms of $\big(v,\s\big)$: we get
\begin{equation} \label{eq:v-s}
\left\{\begin{array}{l}
       \d_tv\,-\,\d_x\Big(\s^{n\alpha}\,\d_xv\Big)\,=\,0 \\[1ex]
       \d_t\s\,-\,\d_x\Big(\s^{n\bt}\,\d_x\s\Big)\,=\,\dfrac{1}{n}\,\s^{n(\al-1)+1}\,\big|\d_xv\big|^2\,+\,(n-1)\,\s^{n\bt-1}\,\big|\d_x\s\big|^2\,.
       \end{array}
\right.
\end{equation}
Differentiating twice the equation for $v$ with respect to the space variable yields
\begin{align*}
&\d_t\dd v\,-\,\d_x\Big(\s^{n\al}\,\d_x\dd v\Big) \\ 
&\qquad =\,n\al\,(n\al-1)\,(n\al-2)\,\s^{n\al-3}\,\big(\d_x\s\big)^3\,\d_xv\,+\,2n\al\,(n\al-1)\,\s^{n\al-2}\,\d_x\s\,\dd\s\,\d_xv \\
&\qquad\qquad +\,3n\al\,(n\al-1)\,\s^{n\al-2}\,\big(\d_x\s\big)^2\,\dd v\,+\,n\al\,\d_x\Big(\s^{n\al-1}\,\dd\s\,\d_xv\Big) \\
&\qquad\qquad\qquad +\,2n\al\,\s^{n\al-1}\,\dd\s\,\dd v\,+\,2n\al\,\s^{n\al-1}\,\d_x\s\d_x^3v\,.
\end{align*}
Keeping in mind the computations of the previous subsection (for the case $n=2$), it is apparent
that the most dangerous terms are the first one and the last one appearing in the right-hand side of the previous expression.

On the one hand, in order to control the last term, namely $2n\al\,\s^{n\al-1}\,\d_x\s\d_x^3v$, when performing an energy estimates
against $\dd v$, it is clear that we need to put a power $\s^{n\al/2}$ together with $\d_x^3v$ and use the effects of the degenerate viscosity. In turn,
this would require the condition $n\al/2\,-\,1\geq0$, \tsl{i.e.} $n\al\geq2$. Notice that, when performing estimates at the $H^m$ level of regularity,
one would rather need to consider a term of the form
\[
2n\al\,\s^{n\al-1}\,\d_x\s\d_x^{m+1}v\,,
\]
which requires again the same condition $n\al\geq2$ to be dealt with.

On the other hand, in order to control the term $n\al\,(n\al-1)\,(n\al-2)\,\s^{n\al-3}\,\big(\d_x\s\big)^3\,\d_xv$, one simply needs that
either that term vanishes, which implies $n\al\in\big\{1,2\big\}$, or that the power of $\s$ is positive, namely $n\al\geq3$. When considering
general $H^m$ estimates, instead, a simple induction argument shows that the corresponding term has the form
\[
 n\al\,(n\al-1)\,(n\al-2)\cdots(n\al-m)\,\s^{n\al-(m+1)}\,\big(\d_x\s\big)^{m+1}\,\d_xv\,,
\]
which, to be controlled, requires either $n\al\in\big\{1,2,\ldots m\big\}$, or $n\al\geq m+1$.

When computing the equation for $\dd\s$, we will get all the terms appearing above (where $\bt$ and $\s$ will replace, respectively, $\al$ and $v$),
which come from the commutator $\d_x\big([\s^{n\bt},\dd]\,\d_x\s\big)$ of the parabolic part of the equation. However,
we have to pay attention also to the terms arising from the differentiation of the right-hand side of the second equation in \eqref{eq:v-s}.

First of all, when applying the operator $\d_x^m$ to the term $(n-1)\,\s^{n\bt-1}\,\big|\d_x\s\big|^2$, the worst terms appear when
the operator acts onto the factor $\s^{n\bt-1}$, giving rise to
\begin{equation} \label{eq:bad_1}
(n-1)\,(n\bt-1)\,(n\bt-2)\cdots(n\bt-m)\,\s^{n\bt-(m+1)}\,\big|\d_x\s\big|^2\,,
\end{equation}
and when the operator acts always on the same factor $\d_x\s$, yielding a term of the form
\begin{equation} \label{eq:bad_2}
2\,(n-1)\,\s^{n\bt-1}\,\d_x\s\,\d_x^{m+1}\s\,.
\end{equation}
In order to control the term appearing in \eqref{eq:bad_1}, we need the condition $n\bt\in\big\{1,2,\ldots m\big\}$ or $n\bt\geq m+1$, whereas
for the term in \eqref{eq:bad_2} we need to use diffusion to absorbe the higher order term $\d_x^{m+1}\s$, giving rise to the condition $n\bt\geq2$.

Finally, we deal with the term $\s^{n(\al-1)+1}\,\big|\d_xv\big|^2$ appearing on the right-hand side of the equation for $\s$.
Repeating \tsl{mutatis mutandis} the previous reasonning, after applying the $\d_x^{m}$ operator to the equation, we have to deal with
terms of the form
\begin{align*}
(n\al-n+1)\,(n\al-n)\cdots\big(n\al-n-(m-2)\big)\,&\s^{n(\al-1)-(m-1)}\,\big(\d_x\s\big)^m\,|\d_xv|^2 \\
&\qquad\mbox{ and }\quad\qquad\qquad
\s^{n(\al-1)+1}\,\d_xv\,\d_x^{m+1}v
\end{align*}
plus of course other terms, which are however of lower order. The term
$$
\s^{n(\al-1)+1}\,\d_xv\,\d_x^{m+1}v
$$
is the most dangerous one. For this term, we have to use again viscous effects, which in turn requires that
\begin{equation} \label{eq:bad-a}
\frac{n\al}{2}\,-\,n\,+\,1\,\geq0\qquad\qquad \Longrightarrow\qquad\qquad \al\,\geq\,2\,\left(1\,-\,\frac{1}{n}\right)\,.
\end{equation}
This requires that $\alpha\geq1$, being the case $n=2$ the best case possible in terms of the size of $\alpha$. For controlling the former one, instead, we need to have
\[
n\al\,\in\,\Big\{n-1\,,\,n\,,\,n+1\,,\ldots\,n+m-2\Big\}\,,\qquad\qquad \mbox{ or }\qquad
n\al\,\geq\,n+m-1\,.
\]

To conclude this part, we observe that, if we renounce to a certain symmetry of conditions to be imposed on $\al$ and $\bt$, the fact of taking $n\geq3$ imposes
more severe requirements on $\al$, keep in mind \eqref{eq:bad-a}, but allows to go much below the threshold value $1$ for the parameter $\bt$.

\section{Finite time blow-up} \label{s:blow-up}

We now present the finite time blow-up of smooth solutions to \eqref{eq:toy}, under suitable conditions
on the parameters $\alpha$ and $\beta$ and on the symmetry of the initial data.



\begin{thm} \label{t:symmetry}
Let $\alpha\geq1$ and $\beta\geq1$ be two real parameters.
Let $\big(v_0,\o_0\big)$ be a couple of functions belonging to $H^5(\T)$ such that $\o_0\geq0$, 
and let $\big(v,\o\big)$ be the related $H^5$ solution constructed in Theorems \ref{t:wp}.
Let the following conditions be in force:
\begin{enumerate}[(i)]
 \item $v_0$ is odd with respect to $0$, while $\o_0$ is even with respect to $0$;
 \item $\o_0(0)=0$, with $\d_x^2\o_0(0)>0$;
\end{enumerate}
In addition, assume one of the following two conditions:
\begin{itemize}
 \item $\bt=1$, in which case take also $\alpha=1$ or $\alpha\geq2$;
 \item $\bt=2$ or $\beta\geq3$, and then take $\alpha=1$; in this case, assume moreover that
\[
\d_x v_0(0)\,\geq\,\sqrt{2\,\d_x^2\o_0(0)}\,.
\]
\end{itemize}

Then, there exists a time $t_0>0$ such that, if the solution $\big(v,\o\big)$ does not blow up first at a different time and place, one has
\[
\lim_{t\ra t_0^-}\d_x^2 \o(t,0)\,=\,+\infty\,.
\]
\end{thm}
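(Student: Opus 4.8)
The plan is to collapse the PDE \eqref{eq:toy} to a closed, finite-dimensional ODE system for the two scalar quantities $a(t):=\dd\o(t,0)$ (the curvature) and $b(t):=\d_x v(t,0)$ (the slope), and then to produce a Riccati-type blow-up for $a$. First I would exploit the symmetry. System \eqref{eq:toy} is invariant under the reflection $(x,v,\o)\mapsto(-x,-v,\o)$, so $\big(-v(t,-x),\o(t,-x)\big)$ is again a solution carrying, by hypothesis (i), the same initial datum; the uniqueness statement of Theorem \ref{t:wp} then forces $v(t,\cdot)$ to remain odd and $\o(t,\cdot)$ to remain even on the whole life span. In particular, at the origin all even-order derivatives of $v$ and all odd-order derivatives of $\o$ vanish, i.e. $v(t,0)=\dd v(t,0)=0$ and $\d_x\o(t,0)=\d_x^3\o(t,0)=0$. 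Next I would propagate the vanishing $\o(t,0)=0$ of hypothesis (ii): evaluating the $\o$-equation at $x=0$ and using $\d_x\o(t,0)=0$ yields, for $m(t):=\o(t,0)$, an identity of the form $m'(t)=m(t)\big(\kappa_0\,m(t)^{\beta-1}a(t)+m(t)^{\alpha-1}b(t)^2\big)$. Since $\alpha,\beta\geq1$ and the solution is smooth, the bracket stays bounded on any compact time interval, so $m(0)=0$ gives $m\equiv0$ by Gronwall.

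With $\o(t,0)=\d_x\o(t,0)=\d_x^3\o(t,0)=0$ and $v(t,0)=\dd v(t,0)=0$ secured, I would then derive the governing ODEs by differentiating \eqref{eq:toy}, evaluating at $x=0$, and tracking which degenerate prefactors $\o^{\alpha-1},\o^{\alpha-2},\o^{\beta-1},\dots$ survive at the double zero of $\o$ (here a short Taylor expansion $\o\sim\frac12 a\,x^2$ is used). Differentiating the $v$-equation once gives $b'(t)=\dd\big(\o^\alpha\,\d_x v\big)(t,0)$, which equals $a(t)\,b(t)$ when $\alpha=1$ and vanishes when $\alpha>1$. Differentiating the $\o$-equation twice gives $a'(t)=\kappa_0\,\d_x^3\big(\o^\beta\,\d_x\o\big)(t,0)+\dd\big(\o^\alpha\,|\d_x v|^2\big)(t,0)$, where the diffusion part contributes $3\kappa_0\,a^2$ when $\beta=1$ and $0$ when $\beta>1$, while the source part contributes $a\,b^2$ when $\alpha=1$ and $0$ when $\alpha>1$.

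In the case $\beta=1$ (with $\alpha=1$ or $\alpha\geq2$) the source contribution is nonnegative (either $a\,b^2\geq0$ while $a\geq0$, or $0$), so $a'\geq3\kappa_0\,a^2$; since $a(0)=\dd\o_0(0)>0$, comparison with the Riccati equation $y'=3\kappa_0 y^2$ shows that $a$ stays positive, increases, and must diverge at some $t_0\leq 1/(3\kappa_0\,a(0))$. In the case $\beta>1$ with $\alpha=1$ the diffusion term drops out and one is left with the coupled system $a'=a\,b^2$, $b'=a\,b$. The key observation is the conservation law $\frac{d}{dt}\big(b^2-2a\big)=2b\,b'-2a'=2a\,b^2-2a\,b^2=0$; the hypothesis $\d_x v_0(0)\geq\sqrt{2\,\dd\o_0(0)}$ is exactly $b(0)^2\geq 2a(0)$, which forces $b^2\geq 2a$ for all times and hence $a'=a\,b^2\geq 2a^2$, again a Riccati inequality driving $a(t)\to+\infty$ before $1/(2a(0))$. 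In either case the quantity that blows up is $a=\dd\o(t,0)$, as claimed, and this can only fail if the $H^5$ solution develops a singularity earlier elsewhere, which is precisely the escape clause in the statement.

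I expect the main obstacle to be the careful bookkeeping of the second paragraph rather than the ODE analysis. One must verify that the ``dangerous'' terms carrying powers $\o^{\alpha-2},\o^{\alpha-3},\o^{\beta-2},\o^{\beta-3}$ genuinely vanish at $x=0$ and that the remaining prefactors $\o^{\alpha-1},\o^{\beta-1}$ produce exactly the coefficients above — this is where the constraints $\alpha\in\{1\}\cup[2,\infty)$ and $\beta\in\{1,2\}\cup[3,\infty)$ enter, both by annihilating or regularizing the critical powers and by guaranteeing, via the embedding $H^5(\T)\hookrightarrow C^4(\T)$, that all spatial derivatives up to order four may be evaluated pointwise at the origin and that $a(t)$ and $b(t)$ are genuinely differentiable in time. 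Once these pointwise identities are established, the comparison and Riccati arguments are elementary.
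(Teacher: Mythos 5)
Your proposal is correct and follows essentially the same route as the paper: propagate the odd/even symmetry and the vanishing $\o(t,0)=0$, reduce to the ODE system for $\d_xv(t,0)$ and $\d_x^2\o(t,0)$ at the origin, and close with a Riccati comparison (your conserved quantity $b^2-2a$ in the case $\alpha=1$, $\beta>1$ is exactly the paper's integrated identity $\Omega^{(2)}-\tfrac12(V^{(1)})^2=\mathrm{const}$). The only cosmetic difference is that you merge the paper's two $\beta=1$ subcases into one inequality $a'\geq 3\kappa_0 a^2$.
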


Before proving the previous result, some remarks are in order.

\begin{rmk} \label{r:blow-up}
In fact, we are going to prove that, when $\alpha=1$ (and then $\beta\geq 3$ or $\bt\in\{1,2\}$),
then\footnote{Recall that, in the case $\bt>1$, one also needs the additional condition linking $\d_xv_0(0)$ and $\dd\o_0(0)$.}
$\dd\o(t,0)$ blows up in finite time if and only if $\d_xv(t,0)$ also blows up at the same time.

In the case $\bt=1$ and $\alpha\geq2$, instead, only $\dd\o(t,0)$ blows up, while $\d_xv(t,0)$ remains bounded (in fact, it remains constant, equal
to $\d_xv_0(0)$ at any time).

In this sense, Theorem \ref{t:symmetry} strengthen the result of \cite{F-GB}, inasmuch as it shows that the blow-up is not only of Burgers type,
but really relies on a different mechanism. In particular, the blow up in the case $\bt=1$ and $\alpha\geq2$ is not due to compressibility of the velocity field.
\end{rmk}

We now present the proof of Theorem \ref{t:symmetry}.

\begin{proof}[Proof of Theorem \ref{t:symmetry}]
The proof follows the main lines of the method introduced in \cite{Bae-Gra} (see also \cite{Jeong-Kang}). Namely, we will prove that, assuming that the
solution exists and remains smooth, then there is a finite time singularity happening at $(t,x)=(t_0,0)$.

To begin with, 
we introduce the following notation: for any $j\in\N$, we set
$$
V^{(j)}(t)\,:=\,\left(\d_x^jv\right)(t,0)\qquad\qquad \text{ and }\qquad\qquad \Omega^{(j)}(t)\,:=\,\left(\d_x^j\o\right)(t,0)\,.
$$

Next, we observe that we have supposed $v_0$ to be odd and $\o_0$ to be even with respect to the origin, and such symmetries are preserved by the evolution
for strong solutions. As a consequence, we obtain that
\[
\forall\,t\geq0\,,\qquad\qquad\qquad
V^{(0)}(t)\,=\,0\qquad\qquad \mbox{ and } \qquad\qquad V^{(2)}(t)\,=\,0\,.
\]
together with the corresponding counterparts for the function $\o$:
\[
\forall\,t\geq0\,,\qquad\qquad\qquad
\Omega^{(1)}(t)\,=\,0\qquad\qquad \mbox{ and } \qquad\qquad \Omega^{(3)}(t)\,=\,0\,.
\]

Now, let us compute the equation for $\o$ at the point $(t,0)$, for any time $t\geq0$: we have
\begin{align*}
\frac{d}{dt}\Omega^{(0)}\,&=\,\kappa_0\,\beta\left(\Omega^{(0)}\right)^{\beta-1}\left(\Omega^{(1)}\right)^2\,+\,
\kappa_0\left(\Omega^{(0)}\right)^\beta\,\Omega^{(2)}\,+\,\left(\Omega^{(0)}\right)^\alpha\,\big|V^{(1)}\big|^2 \\
&=\,\kappa_0\,\left(\Omega^{(0)}\right)^\beta\,\Omega^{(2)}\,+\,\left(\Omega^{(0)}\right)^\alpha\,\big|V^{(1)}\big|^2\,,
\end{align*}
where we have used that $\Omega^{(1)}(t)=0$ for any $t\geq0$. Integrating the previous ODE, we get
\[
\forall\,t\geq0\,,\qquad 
\Omega^{(0)}(t)\,=\,\Omega^{(0)}(0)\,\exp\left(\int_0^t\kappa_0\left(\Omega^{(0)}\right)^{\beta-1}\,\Omega^{(2)}\,+\,
\left(\Omega^{(0)}\right)^{\alpha-1}\,\big|V^{(1)}\big|^2d\t\right)\,,
\]
from which we conclude that
\begin{equation} \label{eq:omega-0}
\forall\,t\geq0\,,\qquad\qquad\qquad \Omega^{(0)}(t)\,=\,0\,.
\end{equation}

At this point, we differentiate the equation for $v$ with respect to $x$ to get
\begin{align*}
\d_t\d_xv\,=\,\o^\alpha\,\d_x^3v\,+\,2\,\alpha\,\o^{\alpha-1}\,\d_x\o\,\d_x^2v\,+\,\alpha\,(\alpha-1)\,\o^{\alpha-2}\,\big(\d_x\o\big)^2\,\d_xv\,+\,
\alpha\,\o^{\alpha-1}\,\d_x^2\o\,\d_xv\,.
\end{align*}
Taking the value of the previous expression at the point $x=0$ and using \eqref{eq:omega-0} and the symmetry properties of the solution $\big(v,\o\big)$, we obtain
\begin{equation} \label{eq:V^1}
\frac{d}{dt}V^{(1)}\,=\,\alpha\,\left(\Omega^{(0)}\right)^{\alpha-1}\,\Omega^{(2)}\,V^{(1)}\,=\,
\left\{\begin{array}{lcl}
        \Omega^{(2)}\,V^{(1)} \qquad & \mbox{ if } \ & \alpha=1 \\[1ex]
        0 \qquad & \mbox{ if } \ & \alpha\geq 2\,.
       \end{array}
\right.
\end{equation}

Similarly, long but straightforward computations yield
\begin{align*}
\d_t\dd\o\,&=\,\k_0\,\o^\bt\,\d_x^4\o\,+\,4\,\k_0\,\bt\,\o^{\bt-1}\,\d_x\o\,\d_x^3\o\,+\,3\,\k_0\,\bt\,\o^{\bt-1}\,\big(\dd\o\big)^2 \\
&\qquad +\,6\,\k_0\,\bt\,(\bt-1)\,\o^{\bt-2}\,\big(\d_x\o\big)^2\,\dd\o\,+\,\k_0\bt\,(\bt-1)\,(\bt-2)\,\o^{\bt-3}\,\big(\d_x\o\big)^4 \\
&\qquad\qquad\qquad +\,\al\,(\al-1)\,\o^{\al-2}\,\big(\d_x\o\big)^2\,\big(\d_xv\big)^2\,+\,\al\,\o^{\al-1}\,\dd\o\,\big(\d_xv\big)^2 \\
&\qquad\qquad\qquad\qquad\qquad +\,
4\,\al\,\o^{\al-1}\,\d_x\o\,\d_xv\,\dd v\,+\,2\,\o^\al\,\big(\dd v\big)^2\,+\,2\,\o^\al\,\d_xv\,\d_x^3v\,.
\end{align*}
Then, owing again to the symmetry properties of the solution and to \eqref{eq:omega-0}, we infer
\begin{equation} \label{eq:O^2}
\frac{d}{dt}\Omega^{(2)}\,=\,3\,\k_0\,\bt\,\left(\Omega^{(0)}\right)^{\bt-1}\,\left(\Omega^{(2)}\right)^2\,+\,
\alpha\,\left(\Omega^{(0)}\right)^{\alpha-1}\,\Omega^{(2)}\,\left(V^{(1)}\right)^2\,.
\end{equation}

We stress here that the previous result holds true for any $\alpha\geq1$ and $\beta\geq1$ such that:
\begin{itemize}
 \item $\alpha=1$, or $\alpha\geq2$;
 \item $\bt\in\{1,2\}$, or $\beta\geq3$.
\end{itemize}
Of course, depending on the precise values of those parameters, one can simplify further the previous expression, owing to \eqref{eq:omega-0}.

To complete our proof, we now consider three different cases.

\medbreak
\noindent \tit{Case 1: $\alpha=1$ and $\big[\bt=2$ or $\beta\geq3\big]$.}

In this case, putting \eqref{eq:V^1} and \eqref{eq:O^2} together, we find that
\[
\frac{d}{dt}V^{(1)}\,=\,\Omega^{(2)}\,V^{(1)}\qquad\qquad \mbox{ and }\qquad\qquad 
\frac{d}{dt}\Omega^{(2)}\,=\,\Omega^{(2)}\,\left(V^{(1)}\right)^2\,.
\]
In particular, we deduce the inequality
\[
\frac{d}{dt}\Omega^{(2)}\,=\,\frac{1}{2}\,\frac{d}{dt}\left(V^{(1)}\right)^2\qquad\quad\Longrightarrow\qquad\quad 
\Omega^{(2)}(t)\,-\,\Omega^{(2)}(0)\,=\,\frac{1}{2}\,\left(\left(V^{(1)}(t)\right)^2-\left(V^{(1)}(0)\right)^2\right)\,,
\]
which immediately implies that $V^{(1)}$ blows up at some time $t=t_0$ if and only if $\Omega^{(2)}$ blows up at the same time $t=t_0$.

To see that the blow-up indeed occurs at some time $t=t_0$, we use the previous equation to write that
\[
\frac{d}{dt}\Omega^{(2)}\,=\,\Omega^{(2)}\,\left(V^{(1)}\right)^2\,=\,
2\,\left(\Omega^{(2)}\right)^2\,+\,\Omega^{(2)}\,\left(\left(V^{(1)}(0)\right)^2\,-\,2\,\Omega^{(2)}(0)\right)\,\geq\,
2\,\left(\Omega^{(2)}\right)^2\,,
\]
where we have used that $\Omega^{(2)}\geq0$, because $x=0$ is a point of minimum for the function $\o(t)$, together with the assumption
\[
\d_x v_0(0)\,\geq\,\sqrt{2\,\d_x^2\o_0(0)}\,.
\]
The previous differential relation easily implies the claimed blow-up in finite time.

\medbreak
\noindent \tit{Case 2: $\alpha=\beta=1$.}

This time, equations \eqref{eq:V^1} and \eqref{eq:O^2} become
\[
\frac{d}{dt}V^{(1)}\,=\,\Omega^{(2)}\,V^{(1)}\qquad\qquad \mbox{ and }\qquad\qquad 
\frac{d}{dt}\Omega^{(2)}\,=\,3\,\k_0\,\left(\Omega^{(2)}\right)^2\,+\,\Omega^{(2)}\,\left(V^{(1)}\right)^2\,.
\]

First of all, we observe that, in particular, one has
\[
\frac{d}{dt}\Omega^{(2)}\,\geq\,\Omega^{(2)}\,\left(V^{(1)}\right)^2\,.
\]
Thus, arguing as above, we see that a blow-up of $V^{(1)}$ at time $t=t_0$ implies a blow-up, at the same time, also of the quantity $\Omega^{(2)}$.

On the other hand, as $x=0$ is a point of minimum of the function $\o(t,\cdot)$, we deduce that $\Omega^{(2)}\geq0$, which implies that
\[
 \frac{d}{dt}\Omega^{(2)}\,\geq\,3\,\k_0\,\left(\Omega^{(2)}\right)^2\,,
\]
and this, in turn, yields finite time blow-up of the quantity $\Omega^{(2)}$, with rate $O\big((t_0-t)^{-1}\big)$:
\begin{equation} \label{est:O_blow-up}
\Omega^{(2)}(t)\,\geq\,\frac{\Omega^{(2)}(0)}{1\,-\,3\,\k_0\,\Omega^{(2)}(0)\,t}\,.
\end{equation}

On the other hand, solving explicitly the equation for $V^{(1)}$, we gather
\[
V^{(1)}(t)\,=\,V^{(1)}(0)\,\exp\left(\int^t_0\Omega^{(2)}(\t)\,d\t\right)\,.
\]
Then, estimate \eqref{est:O_blow-up} implies that also $V^{(1)}$ must blow up at the same time $t=t_0$ at which $\Omega^{(2)}$ blows up.

\medbreak
\noindent \tit{Case 3: $\alpha\geq2$ and $\beta=1$.}

Finally, we consider the case $\alpha\geq 2$ and $\bt=1$. In this case, owing to \eqref{eq:omega-0}, relations \eqref{eq:V^1} and \eqref{eq:O^2} become
\[
\frac{d}{dt}V^{(1)}\,=\,0\qquad\qquad \mbox{ and }\qquad\qquad 
\frac{d}{dt}\Omega^{(2)}\,=\,3\,\k_0\,\left(\Omega^{(2)}\right)^2\,.
\]
Thus, we easily see that $V^{(1)}(t)\,=\,V^{(1)}(0)$ remains bounded (constant, in fact) during the evolution. On the other hand,
exactly as before, from the second equation we gather that
\[
\Omega^{(2)}(t)\,=\,\frac{\Omega^{(2)}(0)}{1\,-\,3\,\k_0\,\Omega^{(2)}(0)\,t}\,\longrightarrow\,+\infty\qquad\qquad \mbox{ when }\qquad t\,\ra\, t_0^-\,,
\]
where we have set $t_0\,:=\,\big(3\,\k_0\,\Omega^{(2)}(0)\big)^{-1}$.
\end{proof}

\section{The convective case} \label{s:convective}

In this section, we discuss the case when we add to system \eqref{eq:toy} two convective terms, one in each equation. The equations then become
\begin{equation} \label{eq:parab}
\left\{\begin{array}{l}
       \d_tv\,+\,v\,\d_xv\,-\,\d_x\left(\o^\alpha\,\d_xv\right)\,=\,0 \\[1ex]
       \d_t\o\,+\,v\,\d_x\o\,-\,\kappa_0\d_x\left(\o^\beta\,\d_x\o\right)\,=\,\o^\alpha\,\big|\d_xv\big|^2\,,
       \end{array}
\right.
\end{equation}
supplemented, as before, with initial conditions $\big(v,\o\big)_{|t=0}\,=\,\big(v_0,\o_0\big)$, for some $\o_0\geq0$.
In our modest opinion, the inclusion of the convection term makes this model closer to problems in connection with turbulence theory.

It goes without saying that the well-posedness theory for system \eqref{eq:toy}, presented in Section \ref{s:wp}, applies also
to system \eqref{eq:parab}, with only minor changes. The control of the convective terms $v\,\d_xv$ and $v\,\d_x\o$ and of their higher order derivatives
is fairly standard; for instance, one can repeat the analysis of \cite{F-GB}. Notice however that, as now $v$ is no more divergence-free
and there is no damping term in system \eqref{eq:parab}, the energy $E_0\,=\,\|v\|^2_{L^2}\,+\,\|\o\|_{L^1}$ may grow in time with exponential rate.

Therefore, our main goal here is to establish finite time blow-up results for system \eqref{eq:parab}, in the same spirit of Theorem \ref{t:symmetry} above.
We present two kind of results. The first one concerns the case when the initial velocity field is assumed to satisfy a ``Burgers-type'' condition: then,
finite time singularity formation can be proved for the whole class of parameters considered in Theorem \ref{t:symmetry} above.
\begin{thm} \label{th:sing_convective}
Let $\alpha\geq1$ and $\beta\geq1$ be two real parameters satisfying
\[
\alpha\,\in\,\{1\}\,\cup\,[2,+\infty[\,\qquad\qquad \mbox{ and }\qquad\qquad \beta\,\in\,\{1,2\}\,\cup\,[3,+\infty[\,.
\]
Let $\big(v_0,\o_0\big)$ be a couple of functions belonging to $H^5(\Omega)$, with $\o_0\geq0$, 
and let $\big(v,\o\big)$ be the related $H^5$ solution. 
Assume that:
\begin{enumerate}[(i)]
 \item $v_0$ is odd with respect to $0$, while $\o_0$ is even with respect to $0$;
 \item $\o_0(0)=0$, with $\d_x^2\o_0(0)>0$;
 \item $\d_xv_0(0)<0$.
\end{enumerate}

Then, there exists a time $t_0>0$ such that the solution $\big(v,\o\big)$ blows up at time $t=t_0$ for $x=0$: specifically, one has either
\[
\lim_{t\ra t_0^-}\d_xv(t,0)\,=\,-\infty\qquad\qquad \mbox{ or }\qquad\qquad  \lim_{t\ra t_0^-}\dd\o(t,0)\,=\,+\infty\,.
\]
\end{thm}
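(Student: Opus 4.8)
The plan is to reduce the PDE statement to a pair of ODEs evaluated at the symmetric point $x=0$, exactly as in the proof of Theorem \ref{t:symmetry}, and then exploit the sign of $\d_xv_0(0)$ to force a Burgers-type blow-up of $\d_xv(t,0)$ unless the curvature $\dd\o(t,0)$ blows up first. The first step is to record that the symmetry hypotheses (i) are preserved by the flow of system \eqref{eq:parab}: since the convective terms $v\,\d_xv$ and $v\,\d_x\o$ respect the parity structure (odd $v$, even $\o$), the odd/even symmetries propagate, so that for all admissible times one has $V^{(0)}(t)=V^{(2)}(t)=0$ and $\Omega^{(1)}(t)=\Omega^{(3)}(t)=0$, with the same notation $V^{(j)}(t)=(\d_x^jv)(t,0)$ and $\Omega^{(j)}(t)=(\d_x^j\o)(t,0)$. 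The crucial point is that at $x=0$ the convective terms contribute nothing new at the relevant orders: $v\,\d_x\o$ evaluated at $x=0$ vanishes because $V^{(0)}=0$, and when we differentiate and evaluate, the extra convective contributions to the equation for $\Omega^{(0)}$ vanish, so the argument establishing $\Omega^{(0)}(t)\equiv0$ (equation \eqref{eq:omega-0}) carries over unchanged.

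Next I would differentiate the equation for $v$ once in $x$, evaluate at $x=0$, and track the convective correction. Writing $\d_t\d_xv + \d_x(v\,\d_xv) = \d_x(\o^\alpha\,\d_xv)$ and using $V^{(0)}=V^{(2)}=0$ together with $\Omega^{(0)}(t)=0$, the convective term $\d_x(v\,\d_xv)=(\d_xv)^2+v\,\d_x^2v$ contributes exactly $\big(V^{(1)}\big)^2$ at the origin. Hence \eqref{eq:V^1} is replaced by
\[
\frac{d}{dt}V^{(1)}\,=\,-\big(V^{(1)}\big)^2\,+\,\alpha\,\big(\Omega^{(0)}\big)^{\alpha-1}\,\Omega^{(2)}\,V^{(1)}\,=\,-\big(V^{(1)}\big)^2\,,
\]
since $\Omega^{(0)}(t)=0$ makes the viscous term vanish whenever $\alpha>1$, while for $\alpha=1$ it reads $\Omega^{(2)}V^{(1)}$. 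The decisive structural fact is the Burgers sign: because $V^{(1)}(0)=\d_xv_0(0)<0$, the quadratic damping $-\big(V^{(1)}\big)^2$ drives $V^{(1)}$ toward $-\infty$ in finite time. In the purely Burgers case $\alpha\geq2$ one has exactly $\frac{d}{dt}V^{(1)}=-\big(V^{(1)}\big)^2$, whose solution $V^{(1)}(t)=V^{(1)}(0)/\big(1+V^{(1)}(0)\,t\big)$ blows up to $-\infty$ at $t_0=-1/V^{(1)}(0)=1/|\d_xv_0(0)|$.

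For the case $\alpha=1$ one must combine the equation for $V^{(1)}$ with that for $\Omega^{(2)}$, which by the same curvature computation as in \eqref{eq:O^2} (again with the convective contributions vanishing at $x=0$ thanks to $\Omega^{(0)}=\Omega^{(1)}=0$) satisfies $\frac{d}{dt}\Omega^{(2)}=3\,\k_0\,\bt\,\big(\Omega^{(0)}\big)^{\bt-1}\big(\Omega^{(2)}\big)^2+\Omega^{(2)}\big(V^{(1)}\big)^2$. The strategy here is dichotomic: either $\Omega^{(2)}$ itself blows up to $+\infty$, giving the second alternative in the conclusion, or it stays bounded on $[0,t_0)$, in which case the equation $\frac{d}{dt}V^{(1)}=-\big(V^{(1)}\big)^2+\Omega^{(2)}V^{(1)}$ with $V^{(1)}(0)<0$ is handled by a comparison argument: as long as $\Omega^{(2)}$ is bounded by some constant $M$, one has $\frac{d}{dt}V^{(1)}\leq -\big(V^{(1)}\big)^2+M\,V^{(1)}\leq -\tfrac12\big(V^{(1)}\big)^2$ once $V^{(1)}$ is sufficiently negative, forcing $V^{(1)}\to-\infty$ in finite time. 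Either way one of the two stated limits holds, which is the desired conclusion. I expect the main obstacle to be the careful verification that, at the symmetric point $x=0$, every convective contribution to the higher-order ODEs either vanishes (via $V^{(0)}=V^{(2)}=\Omega^{(1)}=\Omega^{(3)}=0$ and $\Omega^{(0)}\equiv0$) or reduces cleanly to the single Burgers term $-\big(V^{(1)}\big)^2$; once this bookkeeping is settled the blow-up follows from the elementary Riccati-type comparisons above.
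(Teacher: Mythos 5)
Your overall strategy --- reducing to ODEs for $V^{(1)}$ and $\Omega^{(2)}$ at the symmetric point and running Riccati-type comparisons --- is exactly the paper's, and your treatment of the $v$-equation is correct: the convective term contributes precisely $-\big(V^{(1)}\big)^2$ at the origin. However, your central bookkeeping claim for the $\o$-equation is wrong: the convective contribution to the equation for $\dd\o$ does \emph{not} vanish at $x=0$. Indeed
\[
-\,\dd\big(v\,\d_x\o\big)\,=\,-\,v\,\d_x^3\o\,-\,2\,\d_xv\,\dd\o\,-\,\dd v\,\d_x\o\,,
\]
and while the first and third terms vanish at the origin (since $V^{(0)}=\Omega^{(1)}=0$), the middle one survives and contributes $-2\,V^{(1)}\,\Omega^{(2)}$. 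The correct ODE is
\[
\frac{d}{dt}\Omega^{(2)}\,=\,3\,\k_0\,\bt\,\big(\Omega^{(0)}\big)^{\bt-1}\big(\Omega^{(2)}\big)^2\,+\,\alpha\,\big(\Omega^{(0)}\big)^{\alpha-1}\,\Omega^{(2)}\,\big(V^{(1)}\big)^2\,-\,2\,V^{(1)}\,\Omega^{(2)}\,,
\]
not the one you wrote.

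Fortunately the dropped term has a favourable sign, so your conclusion survives: since $x=0$ is a minimum of $\o(t,\cdot)\geq0$ one has $\Omega^{(2)}\geq0$ (a fact you should state, as your sign arguments rely on it), and $V^{(1)}$ stays negative, whence $-2V^{(1)}\Omega^{(2)}\geq0$. Your Case $\alpha\geq2$ uses only the $V^{(1)}$ equation and is fine; your Case $\alpha=1$ dichotomy also goes through, and can in fact be simplified, since $\Omega^{(2)}V^{(1)}\leq0$ gives $\frac{d}{dt}V^{(1)}\leq-\big(V^{(1)}\big)^2$ directly, with no need for the bound $M$ (note also that your displayed inequality $\frac{d}{dt}V^{(1)}\leq-\big(V^{(1)}\big)^2+MV^{(1)}$ has the comparison backwards: $\Omega^{(2)}\leq M$ and $V^{(1)}<0$ give $\Omega^{(2)}V^{(1)}\geq MV^{(1)}$; and the dichotomy should observe that $\frac{d}{dt}\Omega^{(2)}\geq0$, so ``unbounded'' indeed means ``tends to $+\infty$''). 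The paper actually exploits the term you discarded to prove more than the stated alternative: $\Omega^{(2)}$ always blows up as well, via $\frac{d}{dt}\Omega^{(2)}=-2V^{(1)}\Omega^{(2)}$ when $\alpha\geq2$ and $\bt\geq2$, and via $\frac{d}{dt}\Omega^{(2)}\geq3\k_0\big(\Omega^{(2)}\big)^2$ when $\bt=1$. Fix the computation and add the positivity remark on $\Omega^{(2)}$, and your argument coincides with the paper's.
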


\begin{proof}
The proof is simple. First of all, we observe that the system with convective terms \eqref{eq:parab} preserves the initial positivity of the function $\o$:
one has $\o(t,x)\geq0$ for any $t\geq0$ and $x\in\T$. By repeating the computations performed in the proof of Theorem \ref{t:symmetry}, we see also
that $\o(t,0)=0$ for any $t\geq0$.

Next, we see that the convective term does not alter the symmetry of the initial data: therefore, for any $t\geq0$ one has that
$v(t)$ remains odd with respect to the origin, whereas $\o(t)$ remains even with respect to the origin.

At this point, we compute the equations for $\d_xv$ and for $\dd\o$. For the former term, it is easy to see that the convective term gives an additional contribution
on the right-hand side of the form
\[
-\,\d_x\big(v\,\d_xv\big)\,=\,-\,v\,\dd v\,-\,(\d_xv)^2\,.
\]
Therefore, computing the resulting expression at the point $x=0$, and adopting the same notation as in the proof of Theorem \ref{t:symmetry}, we find
\begin{equation} \label{eq:V^1_conv}
\frac{d}{dt}V^{(1)}\,=\,\alpha\,\left(\Omega^{(0)}\right)^{\alpha-1}\,\Omega^{(2)}\,V^{(1)}\,-\,\left(V^{(1)}\right)^2\,,
\end{equation}
which takes the place of equation \eqref{eq:V^1}. On the other hand, in the equation for $\dd\o$, the presence of the convective term yields
\[
-\dd\big(v\,\d_x\o\big)\,=\,-\,\left(v\,\d_x^3\o\,+\,2\,\d_xv\,\dd\o\,+\,\dd v\,\d_x\o\right)
\]
on the right-hand side of the expression computed in the previous proof. Then, equation \eqref{eq:O^2} becomes
\begin{equation} \label{eq:O^2_conv}
\frac{d}{dt}\Omega^{(2)}\,=\,3\,\k_0\,\bt\,\left(\Omega^{(0)}\right)^{\bt-1}\,\left(\Omega^{(2)}\right)^2\,+\,
\alpha\,\left(\Omega^{(0)}\right)^{\alpha-1}\,\Omega^{(2)}\,\left(V^{(1)}\right)^2\,-\,2\,V^{(1)}\,\Omega^{(2)}\,.
\end{equation}

With \eqref{eq:V^1_conv} and \eqref{eq:O^2_conv} at hand, we proceed to consider two different cases.

\medbreak
\noindent \tit{Case 1: $\alpha\geq2$.}

In this case, \eqref{eq:V^1_conv} becomes
\begin{equation} \label{eq:V^1_explicit}
\frac{d}{dt}V^{(1)}\,=\,-\,\left(V^{(1)}\right)^2\qquad\qquad \Longrightarrow\qquad\qquad 
V^{(1)}(t)\,=\,\frac{V^{(1)}(0)}{1\,+\,V^{(1)}(0)\,t}\,,
\end{equation}
which of course diverges to $-\infty$ for $t\ra t_0^{-}$, with $t_0\,=\,-\,1/V^{(1)}(0)$. We now claim that this implies explosion also of the quantity
$\Omega^{(2)}(t)$ at the same time $t_0$.

Indeed, when $\beta\geq2$ (recall that, by assumption, this means $\bt=2$ or $\bt\geq3$), equation \eqref{eq:O^2_conv} simply becomes
\[
\frac{d}{dt}\Omega^{(2)}\,=\,-\,2\,V^{(1)}\,\Omega^{(2)}\qquad\qquad \Longrightarrow\qquad\qquad 
\Omega^{(2)}(t)\,=\,\Omega^{(2)}(0)\,\exp\left(-\,2\int^t_0V^{(1)}(\t)\,d\t\right)\,,
\]
which is easily seen to diverge to $+\infty$ for $t\ra t_0^-$, by using the previous explicit expression for $V^{(1)}$ in \eqref{eq:V^1_explicit}.

On the other hand, if $\bt=1$, we have
\[
\frac{d}{dt}\Omega^{(2)}\,=\,3\,\k_0\,\left(\Omega^{(2)}\right)^2\,-\,2\,V^{(1)}\,\Omega^{(2)}\,\geq 3\,\k_0\,\left(\Omega^{(2)}\right)^2\,,
\]
because, by \eqref{eq:V^1_explicit}, $V^{(1)}$ remains negative for all times for which the solution exists.
Therefore, we deduce that also $\Omega^{(2)}$ blows up at some time $t_1\approx\big(3\,\k_0\,\Omega^{(2)}(0)\big)^{-1}$.

\medbreak
\noindent \tit{Case 2: $\alpha=1$.} 

In this case, equations \eqref{eq:V^1_conv} and \eqref{eq:O^2_conv} become
\begin{align*}
\frac{d}{dt}V^{(1)}\,&=\,\Big(\Omega^{(2)}\,-\,V^{(1)}\Big)\,V^{(1)} \\
\frac{d}{dt}\Omega^{(2)}\,&=\,3\,\k_0\,\bt\,\left(\Omega^{(0)}\right)^{\bt-1}\,\left(\Omega^{(2)}\right)^2\,+\,
\Omega^{(2)}\,\left(V^{(1)}\right)^2\,-\,2\,V^{(1)}\,\Omega^{(2)}\,.
\end{align*}
From these ODEs, one can see that $V^{(1)}$ and $\Omega^{(2)}$ remain, respectively, negative and positive, at least for short times. Then, by using the ODEs again,
we see that their time derivatives must, respectively, decrease and increase. Therefore, we finally infer that $V^{(1)}$ remains negative for all times, whereas
$\Omega^{(2)}$ remains positive for all times.

By virtue of this property, we see that
\[
\frac{d}{dt}V^{(1)}\,\leq\,-\,\left(V^{(1)}\right)^2\,,
\]
which again implies blow-up at some time $t_0\approx \left(V^{(1)}(0)\right)^{-1}$, and also that
\[
\frac{d}{dt}\Omega^{(2)}\,\geq\,\Omega^{(2)}\,\left(V^{(1)}\right)^2\,,
\]
which, combined with the previous property, implies the unboundedness of $\Omega^{(2)}$ in $[0,t_0[\,$, or possibly in a smaller time interval.
\end{proof}

\begin{rmk} \label{r:conv_expl}
In fact, we have proven that, in general, it is $\d_xv(t,0)$ which blows up first, and this implies the explosion of $\dd\o(t,0)$ at the same time,
apart from the case when $\bt=1$, for which explosion of the latter quantity may take place before the explosion of the former one (depending
on the relative size of the same quantities at time $t=0$).
\end{rmk}

We now present our second result concerning blow-up phenomena for the system with convection, \tsl{i.e.} system \eqref{eq:parab}.
Our main assumption here will be that $\d_xv_0(0)$ is positive. Therefore, we will see that the blow-up mechanism is truly different
from the previous one (which we have called ``of Burgers type'') and, in particular, to the one pointed out in \cite{F-GB}.

The price to pay is that, with our method, we are not able to show blow-up for the whole range of parameters $\alpha$ and $\beta$ as in Theorem \ref{th:sing_convective},
but we have to restrict to some special cases only.

\begin{thm} \label{th:sing_conv-2}
Let $\bt=1$ and $\alpha\in\{1\}\cup[2,+\infty[\,$. 
Let $\big(v_0,\o_0\big)$ be a couple of functions belonging to $H^5(\Omega)$, with $\o_0\geq0$, 
and let $\big(v,\o\big)$ be the related $H^5$ solution. 
Assume that:
\begin{enumerate}[(i)]
 \item $v_0$ is odd with respect to $0$, while $\o_0$ is even with respect to $0$;
 \item $\o_0(0)=0$, with $\d_x^2\o_0(0)>0$;
 \item $\d_xv_0(0)\geq 0$.
\end{enumerate}
In addition:
\begin{itemize}
 \item if $\alpha\geq2$, we assume that $3\,\k_0\,\dd\o_0(0)\,-\,2\,\d_xv_0(0)\,>\,0$;
 \item if $\alpha=1$, we assume that $3\,\k_0\,\dd\o_0(0)\,>\,1$.
\end{itemize}

Then, there exists a time $t_0>0$ such that the solution $\big(v,\o\big)$ blows up at time $t=t_0$ for $x=0$: more precisely, one has
\[
\lim_{t\ra t_0^-}\dd\o(t,0)\,=\,+\infty\,.
\]
\end{thm}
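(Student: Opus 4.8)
The plan is to reduce everything, exactly as in the proofs of Theorems \ref{t:symmetry} and \ref{th:sing_convective}, to a finite-dimensional system of ODEs for the quantities $V^{(1)}(t)=\d_xv(t,0)$ and $\Omega^{(2)}(t)=\dd\o(t,0)$, and then to exhibit a Riccati-type differential inequality for $\Omega^{(2)}$ which forces blow-up in finite time. First I would record the structural facts established in the previous proofs, which still hold here: the symmetries are preserved by the flow, so that $V^{(0)}=V^{(2)}=0$ and $\Omega^{(1)}=\Omega^{(3)}=0$ for all times; the point $x=0$ stays a zero of $\o(t,\cdot)$, \tsl{i.e.} $\Omega^{(0)}(t)=0$; and, since $\o\geq0$ with $\o(t,0)=0$, the origin is a minimum of $\o(t,\cdot)$, whence $\Omega^{(2)}(t)\geq0$. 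Specialising the ODEs \eqref{eq:V^1_conv} and \eqref{eq:O^2_conv} to $\bt=1$ and using $\Omega^{(0)}(t)=0$ then yields the reduced systems to be analysed in the two cases $\al\geq2$ and $\al=1$.

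For $\al\geq2$ the factor $\big(\Omega^{(0)}\big)^{\al-1}$ kills the coupling term, so that $V^{(1)}$ obeys $\tfrac{d}{dt}V^{(1)}=-\big(V^{(1)}\big)^2$; since $V^{(1)}(0)=\d_xv_0(0)\geq0$, this makes $V^{(1)}$ non-increasing and confined to $[0,V^{(1)}(0)]$. Plugging $V^{(1)}(t)\leq V^{(1)}(0)$ into the equation for $\Omega^{(2)}$ and using $\Omega^{(2)}\geq0$, I would obtain the differential inequality
\[
\frac{d}{dt}\Omega^{(2)}\,\geq\,\Omega^{(2)}\,\big(3\,\k_0\,\Omega^{(2)}\,-\,2\,V^{(1)}(0)\big)\,.
\]
Comparison with the Bernoulli ODE $\tfrac{d}{dt}z=z\big(3\k_0 z-2V^{(1)}(0)\big)$, $z(0)=\Omega^{(2)}(0)$, closes the case: the hypothesis $3\k_0\dd\o_0(0)-2\d_xv_0(0)>0$ means precisely $z(0)>2V^{(1)}(0)/(3\k_0)$, so $z$ increases, the bracket stays positive, the quadratic term dominates, and $z$ (hence $\Omega^{(2)}$) diverges at a finite $t_0$.

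For $\al=1$ the coupling is present and, because here $V^{(1)}(0)\geq0$ rather than $<0$, the convective contribution $-2V^{(1)}\Omega^{(2)}$ now has the ``wrong'' sign and could a priori counteract the blow-up; controlling it is the crux of the argument. I would first note that $\{V^{(1)}=0\}$ is invariant for $\tfrac{d}{dt}V^{(1)}=\big(\Omega^{(2)}-V^{(1)}\big)V^{(1)}$, so that $V^{(1)}(t)\geq0$ throughout. The key algebraic observation is the completion of the square
\[
\Omega^{(2)}\,\big(V^{(1)}\big)^2\,-\,2\,V^{(1)}\,\Omega^{(2)}\,=\,\Omega^{(2)}\,\big(V^{(1)}-1\big)^2\,-\,\Omega^{(2)}\,,
\]
which turns the equation for $\Omega^{(2)}$ into $\tfrac{d}{dt}\Omega^{(2)}=3\k_0\big(\Omega^{(2)}\big)^2+\Omega^{(2)}\big(V^{(1)}-1\big)^2-\Omega^{(2)}$ and hence, discarding the non-negative square, into
\[
\frac{d}{dt}\Omega^{(2)}\,\geq\,\Omega^{(2)}\,\big(3\,\k_0\,\Omega^{(2)}\,-\,1\big)\,.
\]
The threshold hypothesis $3\k_0\dd\o_0(0)>1$ gives $\Omega^{(2)}(0)>1/(3\k_0)$, so comparison with $\tfrac{d}{dt}z=z\big(3\k_0 z-1\big)$ again yields finite-time blow-up of $\Omega^{(2)}$.

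In both cases the conclusion $\lim_{t\ra t_0^-}\dd\o(t,0)=+\infty$ follows, under the standing proviso that the $H^5$ solution remains smooth up to $t_0$ (which is exactly what the derived ODEs require to be valid). The only genuinely delicate point is the case $\al=1$: the positive sign of $\d_xv_0(0)$ makes the transport term a potential obstruction, and it is precisely the square-completion identity above, together with the sharp smallness threshold $3\k_0\dd\o_0(0)>1$, that guarantees the curvature $\dd\o_0(0)$ is large enough to overpower it and trigger the Riccati blow-up. Everything else is a routine reuse of the reductions already carried out for Theorems \ref{t:symmetry} and \ref{th:sing_convective}.
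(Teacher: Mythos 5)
Your proposal is correct and follows essentially the same route as the paper: reduction, via the preserved symmetries and $\Omega^{(0)}(t)\equiv0$, to the ODE system for $V^{(1)}$ and $\Omega^{(2)}$ at $x=0$ (equations \eqref{eq:V^1_conv}--\eqref{eq:O^2_conv} with $\bt=1$), followed by a Riccati-type comparison in which the two threshold hypotheses enter exactly as you describe. The only (harmless) deviation is in the case $\al=1$: the paper absorbs the cross term $-2\,V^{(1)}\,\Omega^{(2)}$ by Young's inequality with a parameter $\lam_0$ satisfying $\Omega^{(2)}(0)>1/\lam_0>1/(3\k_0)$ together with a persistence argument, whereas you complete the square in $V^{(1)}$ to obtain $\tfrac{d}{dt}\Omega^{(2)}\geq\Omega^{(2)}\big(3\,\k_0\,\Omega^{(2)}-1\big)$ directly; both yield the same conclusion.
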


\begin{proof}
The proof follows the main lines of the one of the previous theorem.
Of course, the symmetry properties of the solutions, as well as the positivity of $\o$ do not depend on the sign of $\d_xv_0(0)$,
so they are kept throught the evolution.

In addition, we have that $\Omega^{(0)}(t)$ remains equal to $0$ for all times $t\geq0$ for which the solution exists.
In particular, the point $x=0$ remains a point of minimum for the function $\o(t)$ at any time $t$, so one always has
$\Omega^{(2)}(t)\geq0$ for all times.

Finally, we point out that expressions \eqref{eq:V^1_conv} and \eqref{eq:O^2_conv} still hold true: recalling that we have taken $\bt=1$ here, we have
\begin{align*}
\frac{d}{dt}V^{(1)}\,&=\,\alpha\,\left(\Omega^{(0)}\right)^{\alpha-1}\,\Omega^{(2)}\,V^{(1)}\,-\,\left(V^{(1)}\right)^2 \\
\frac{d}{dt}\Omega^{(2)}\,&=\,3\,\k_0\,\left(\Omega^{(2)}\right)^2\,+\,
\alpha\,\left(\Omega^{(0)}\right)^{\alpha-1}\,\Omega^{(2)}\,\left(V^{(1)}\right)^2\,-\,2\,V^{(1)}\,\Omega^{(2)}\,.
\end{align*}

As usual, we divide our proof into two different cases, depending on the value of $\alpha$.

\medbreak
\noindent \tit{Case 1: $\alpha\geq2$.}

In this case, using the ODE for $V^{(1)}$, we can compute an explicit expression for it:
\[
 V^{(1)}(t)\,=\,\frac{V^{(1)}(0)}{1\,+\,V^{(1)}(0)\,t}\,.
\]
In particular, we deduce that, for all times $t\geq0$ for which the solution exists, one has
\begin{equation} \label{est:V^1}
0\,\leq\,V^{(1)}(t)\,\leq\,V^{(1)}(0)\,.
\end{equation}

On the other hand, when $\alpha\geq2$, the second ODE becomes
\[
\frac{d}{dt}\Omega^{(2)}\,=\,\Big(3\,\k_0\,\Omega^{(2)}\,-\,2\,V^{(1)}\Big)\,\Omega^{(2)}\,,
\]
from which we deduce, using also \eqref{est:V^1} and the positivity of $\Omega^{(2)}$, the inequalities
\[
\frac{d}{dt}\Omega^{(2)}\,\geq\,\Big(3\,\k_0\,\Omega^{(2)}\,-\,2\,V^{(1)}(0)\Big)\,\Omega^{(2)}\,\geq\,
\frac{1}{3\,\k_0}\,\Big(3\,\k_0\,\Omega^{(2)}\,-\,2\,V^{(1)}(0)\Big)^2
\]

Hence, setting $X(t)\,:=\,3\,\k_0\,\Omega^{(2)}(t)\,-\,2\,V^{(1)}(0)$, we see that the function $X$ satisfies
\[
\frac{d}{dt}X\,\geq\,X^2\,,\qquad\qquad \mbox{ with }\qquad X_{|t=0}\,=\,X(0)\,=\,3\,\k_0\,\Omega^{(2)}(0)\,-\,2\,V^{(1)}(0)\,>\,0\,.
\]
Therefore, $X(t)$ must blow up to $+\infty$ for $t\ra t_0^-$, with $t_0\approx \big(X(0)\big)^{-1}$.

\medbreak
\noindent \tit{Case 2: $\alpha=1$.}

In this case, we focus on the equation for $\Omega^{(2)}$ only, which becomes
\begin{align*}
\frac{d}{dt}\Omega^{(2)}\,&=\,3\,\k_0\,\left(\Omega^{(2)}\right)^2\,+\,\Omega^{(2)}\,\left(V^{(1)}\right)^2\,-\,2\,V^{(1)}\,\Omega^{(2)}\,.
\end{align*}
Now, we use that, for any $\lam>0$ and for any $(y,z)\in\R^2$, we can estimate
\[
y\,z\,\leq\,\frac{1}{\lam}\,y^2\,+\,\lam\,z^2\,,
\]
from which we deduce that, for any $\lam>0$, one has
\begin{equation} \label{eq:ODE_O^2}
\frac{d}{dt}\Omega^{(2)}\,\geq\,\Big(3\,\k_0\,-\,\lam\Big)\,\left(\Omega^{(2)}\right)^2\,+\,\left(\Omega^{(2)}\,-\,\frac{1}{\lam}\right)\,\left(V^{(1)}\right)^2\,.
\end{equation}
As, by assumption, one has $\Omega^{(0)}(0)>1/(3\k_0)$, there exists some $\lam_0>0$ such that
\[
\Omega^{(0)}(0)\,>\,\frac{1}{\lam_0}\,>\,\frac{1}{3\,\k_0}\,.
\]
Using that $\lam_0$ in \eqref{eq:ODE_O^2}, we see that the same inequality has to hold also for later times, and then for all times for which the solution
exists.
Therefore, from \eqref{eq:ODE_O^2} again, we infer
\[
 \frac{d}{dt}\Omega^{(2)}\,\geq\,\Big(3\,\k_0\,-\,\lam_{0}\Big)\,\left(\Omega^{(2)}\right)^2\,,
\]
which immediately implies finite time blow-up of $\Omega^{(2)}$.
\end{proof}

\section*{Acknowledgements}

{\small
The authors are very grateful to the anonymous referees for their careful reading and constructive remarks, as well as for pointing out several interesting references
related to the present work.

The work of both authors has been partially supported by the project ``TURB1D -- Reduced models of turbulence'', operated by the French CNRS through the program ``International 
Emerging Actions 2019''.

The work of the first author has been partially supported by the LABEX MILYON (ANR-10-LABX-0070) of Universit\'e de Lyon, within the program ``Investissement d'Avenir''
(ANR-11-IDEX-0007), and by the projects BORDS (ANR-16-CE40-0027-01), SingFlows (ANR-18-CE40-0027) and CRISIS (ANR-20-CE40-0020-01), all operated by the French National Research Agency (ANR).

The work of the second author was supported by the project "Mathematical Analysis of Fluids and Applications" Grant PID2019-109348GA-I00 funded by MCIN/AEI/ 10.13039/501100011033 and acronym "MAFyA". This publication is part of the project PID2019-109348GA-I00 funded by MCIN/ AEI /10.13039/501100011033. This publication is also supported by a 2021 Leonardo Grant for Researchers and Cultural Creators, BBVA Foundation. The BBVA Foundation accepts no responsibility for the opinions, statements, and contents included in the project and/or the results thereof, which are entirely the responsibility of the authors.

}

\addcontentsline{toc}{section}{References}
{\small

}

\end{document}